\documentclass[11pt]{article}
\usepackage{amsmath, amsthm}
\usepackage{amssymb,amsfonts,cite}
\usepackage{enumerate}
\usepackage{tikz}
\usepackage{forloop}
\usepackage{verbatim}

\newcommand{\fd}{\chi_{2}^{f}}

\newtheorem{theorem}{Theorem}[section]

\newtheorem{problem}[theorem]{Problem}

\newtheorem{conjecture}[theorem]{Conjecture}

\newtheorem{conjecture-conclude}{Conjecture}
\newtheorem{problem-conclude}[conjecture-conclude]{Problem}
\newtheorem{claim}{Claim}

\newcommand\bonusspiral{} 
\def\bonusspiral[#1](#2)(#3:#4)(#5:#6)[#7]{
	\pgfmathsetmacro{\domain}{#4+#7*360}
	\pgfmathsetmacro{\growth}{180*(#6-#5)/(pi*(\domain-#3))}
	\draw [#1,
	shift={(#2)},
	domain=#3*pi/180:\domain*pi/180,
	variable=\t,
	smooth,
	samples=int(\domain/5)] plot ({\t r}: {#5+\growth*\t-\growth*#3*pi/180})
}

\begin{document}

\title{\Large Frugal coloring in planar graphs with maximum degree 4 and 5}

\author{
Bo\v{s}tjan Bre\v{s}ar$^{a,b}$  \and P\v{r}emysl Holub$^{c}$ \and Marko Jakovac$^{a,b}$ \and Petra Melicharov\' a$^{c}$
}

\date{\today}

\maketitle

\begin{center}
$^a$ Faculty of Natural Sciences and Mathematics, University of Maribor, Slovenia\\
\medskip

$^b$ Institute of Mathematics, Physics and Mechanics, Ljubljana, Slovenia\\
\medskip

$^c$ University of West Bohemia, Plze\v n, Czech Republic
\\

\end{center}

\begin{abstract}
A proper coloring of vertices of a graph $G$ is called $t$-frugal if no color appears more than $t$ times in the neighborhood of any vertex of $G$. While $1$-frugal coloring coincides with $2$-distance coloring, we study $2$-frugal colorings of planar graphs with maximum degree $4$ and $5$, respectively. We prove that every planar graph with maximum degree $4$ admits a $2$-frugal coloring with $7$ colors, while a planar graph with maximum degree $5$ admits a $2$-frugal coloring with $10$ colors. The results present variations of parts of the famous Wegner's conjecture, which for planar graphs with maximum degree $\Delta\in\{4,\ldots,7\}$ states that their $2$-distance chromatic number is at most $\Delta+5$. 
\end{abstract}

\noindent {\bf Key words:} frugal coloring, Wegner's conjecture, distance coloring, planar graph, bounded degree.

\medskip\noindent
{\bf AMS Subj.\ Class: 05C15, 05C12}

\section{Introduction}
\label{sec:intro}

A {\em $2$-distance coloring} of a graph $G$ is a coloring of the vertices of $G$ such that vertices at distance at most two obtain distinct colors. Alternatively, it is a proper coloring of the square $G^2$ of $G$, which is the graph obtained from $G$ by adding edges between vertices at distance $2$. The {\em $2$-distance chromatic number} of $G$ is the minimum number of colors in a $2$-distance coloring of $G$, and is denoted by $\chi^2(G)$.   The concept of $d$-distance coloring, where $d$ is a positive integer, was introduced in~\cite{Kramer_intro} and surveyed in~\cite{Kramer_surv}.  

A famous conjecture due to Wegner \cite{wegner} concerns the $2$-distance chromatic number of planar graphs:

\begin{conjecture}[Wegner~\cite{wegner}]
If $G$ is planar with maximum degree $\Delta$, then
\[
\chi^2(G) \leq 
\begin{cases}
7, & \Delta \leq 3, \\
\Delta + 5, & 4 \leq \Delta \leq 7, \\
\left\lfloor \dfrac{3\Delta}{2} \right\rfloor + 1, & \Delta \geq 8.
\end{cases}
\]
\label{conj:W}
\end{conjecture}

The conjecture has been proven for $\Delta=3$ (see Hartke et al.~\cite{HJT} and Thomassen~\cite{th-2018}), but remains open in general. 
Addressing weaker versions of the conjecture may help solving the original problem. 
Among coloring invariants that lie between coloring and $2$-distance coloring, one can investigate a frugal coloring, introduced in 1997 by Hind, Molloy and Reed~\cite{HMR}, and defined as follows. 
For a positive integer $t$, a proper vertex coloring of a graph $G$ is $t$-\textit{frugal} if no color appears more than $t$ times in the neighborhood of any vertex in $G$. The $t$-\textit{frugal chromatic number} of $G$ is the minimum $k$ for which $G$ admits a $t$-frugal coloring with $k$ colors, and we denote it by $\chi_t^f(G)$. Clearly, the condition of $t$-frugal coloring is weaker than that of $2$-distance coloring, and its strength depends on $t$ as described in the following chain of inequalities:
$$\chi(G)=\chi_{\Delta}^{f}(G)\leq \ldots\leq \chi_{2}^{f}(G)\leq \chi_{1}^{f}(G)=\chi^2(G).$$
In this paper, we consider a weaker version of Wegner's conjecture by using $t$-frugal colorings. Such a result would be of independent interest, but may also lead to an approach to resolve the conjecture itself.  The following problem arises naturally:

\begin{problem}
\label{p:limited1}
Prove that for every planar graph $G$ with maximum degree $\Delta\in \{4,5,6,7\}$, $$\fd(G)\le \Delta+5. $$ 
\end{problem}

Relaxations similar to frugal coloring have been fruitful:
Cho, Choi and Lidick\'{y}~\cite{ccl-2024} considered a relaxation of Wegner's conjecture for graphs with maximum degree $4$. They proved that every planar graph has a proper $9$-coloring such that at most one color is allowed to be repeated in the neighborhood of a vertex of degree $4$ (but nowhere else). Note that the condition of $2$-frugal coloring is slightly weaker (since we allow that for every vertex a color may be repeated in its neighborhood, and for vertices of degree $4$ we even allow that two colors are repeated). Hence, the result from~\cite{ccl-2024} implies that $\fd(G)\leq 9$ holds for every planar graph $G$ with maximum degree $4$. Here we improve this upper bound by proving the following statement.

\begin{theorem}
\label{thm:Delta4}
For every planar graph $G$ with $\Delta(G)\le4$, $$\fd(G)\le 7.$$ 
\end{theorem}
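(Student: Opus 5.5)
We argue by minimal counterexample together with discharging. Let $G$ be a planar graph with $\Delta(G)\le 4$ and $\fd(G)>7$ that minimizes $|V(G)|+|E(G)|$. Clearly $G$ is connected. The first step is a list of reducible configurations. In each case we delete a vertex $v$ (or contract or delete a small set $S$), color $G-S$ by minimality with a $2$-frugal $7$-coloring, and extend the coloring to $S$.

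The extension is governed by a simple count. When we recolor $v$, the color $c$ is forbidden in three situations:
\begin{itemize}
\item $c$ is used on a neighbor of $v$;
\item $c$ already appears twice in $N(u)\setminus\{v\}$ for some neighbor $u$ of $v$;
\item $c$ already appears twice in $N(v)$. This cannot happen after adding $v$ unless it already holds, and we only need that $N(v)$ itself has no color used three times.
\end{itemize}
A neighbor $u$ of degree $d(u)\le 4$ has $d(u)-1\le 3$ other neighbors, so it blocks at most one color via the second condition. Hence $v$ has at most $d(v)+d(v)=2d(v)$ forbidden colors. The requirement that $N(v)$ contains no color three times can be enforced by recoloring or choosing among options.

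This gives the following reducible configurations:
\begin{itemize}
\item a vertex of degree at most $3$ is reducible, since it has at most $6<7$ forbidden colors;
\item so $G$ is $4$-regular, and for $d(v)=4$ we may have $8$ forbidden colors, so we need more structure;
\item a degree-$4$ vertex $v$ whose neighborhood contains an edge $u_1u_2$ is handled by noting that $u_1$ is then a common neighbor, so the colors blocked by $u_1$ and $u_2$ overlap with the colors of neighbors.
\end{itemize}
More precisely, we refine the count for $d(v)=4$. We delete $v$ and, when $v$ lies on a triangle $vu_1u_2$, we exploit that $N(u_1)\setminus\{v\}$ contains $u_2$, whose color is already forbidden. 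If $u_1$ has a repeated color among its other neighbors, that color pair must avoid $u_2$. With care this saves a color.

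We would aim to show that a $4$-regular planar graph in which every vertex lies on at least two triangles (or some similar local condition) is reducible. Euler's formula then forces the remaining faces to be triangles. For a $4$-regular plane graph, $\sum_f (d(f)-4)=-8$, so there are at least $8$ triangular faces. Discharging charges $d(f)-4$ on faces and $d(v)-4=0$ on vertices, so the negative charge sits exactly on triangles.

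The main obstacle is the case $d(v)=4$ with no savings from the naive count. Here we would use a Kempe-type or recoloring argument. Deleting $v$ leaves $8$ potentially forbidden colors, namely $4$ neighbor colors and $4$ blocked colors, and we need to show they cannot all be distinct. If they are, we recolor a neighbor $u_i$: $u_i$ has degree $3$ in $G-v$, so it has at most $6$ forbidden colors in $G-v$ and at least one alternative color. We would then show some such swap frees a color for $v$.

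Alternatively, we would identify two nonadjacent neighbors $u_1,u_3$ of $v$ at distance two only through $v$ (possible in planar graphs via the face structure), which keeps planarity and $\Delta\le4$ after deleting $v$. Giving $u_1,u_3$ the same color reduces the number of forbidden colors to at most $7$ while respecting frugality, since the same color appears only twice in $N(v)$. This identification trick, with a check that the merged vertex has degree at most $4$ (guaranteed when $u_1,u_3$ share a further neighbor or lie on a common triangle with $v$'s face), is what I expect to carry the proof. Making it work for every local structure is the hardest part.
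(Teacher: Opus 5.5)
\textbf{There is a genuine gap.} Your proposal never establishes a reducible configuration for the $4$-regular case, and each of the three routes you sketch fails.

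\emph{Single-vertex deletion on a triangle.} Take a degree-$4$ vertex $v$ on a triangle $vu_1u_2$. The repeated pair in $N(u_1)\setminus\{v\}=\{u_2,a,b\}$ can be $\{a,b\}$ with a brand-new color, so nothing is saved. The $4$ neighbor colors plus the $4$ blocked colors can then exhaust $[7]$.

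\emph{Recoloring a neighbor.} Having at most $6$ forbidden colors for $u_i$ in $G-v$ guarantees only one admissible color, and that may be the color $u_i$ already has. So no alternative need exist, and you give no reason why a swap would free a color at $v$.

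\emph{Identifying $u_1,u_3$.} Your own count gives $3$ neighbor colors plus up to $4$ blocked colors, i.e.\ up to $7$ forbidden out of $7$, so no free color is guaranteed. Moreover, the merged vertex has degree $6-|(N(u_1)\cap N(u_3))\setminus\{v\}|$, which is still $5$ with one extra common neighbor. Also, a common neighbor of $u_1,u_3$ sees the merged color twice in $G$ but only once in the contracted graph, so $2$-frugality there is not inherited.

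\emph{Degree $3$.} This step has a hole too. Minimality hands you an arbitrary coloring of $G-v$, in which all three neighbors of $v$ may share a color, and no choice of $c(v)$ repairs that. Saying it ``can be enforced by recoloring or choosing among options'' is not an argument.

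\textbf{The missing idea} is to add edges \emph{before} invoking minimality, so the smaller graph's coloring already respects the constraints at the deleted vertices, and to delete a whole triangle rather than a single vertex.

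For $\deg(x)=3$, color $G-x$ after adding the edge $x_1x_2$ if it is absent; the graph stays planar with $\Delta\le 4$. Then $c(x_1)\ne c(x_2)$, so $N(x)$ cannot contain a color three times, and the $3+3=6$ forbidden colors leave one free.

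Once $G$ is $4$-regular, delete a triangle $xyz$ and add $x_1x_2$, $y_1y_2$, $z_1z_2$ between the two outer neighbors of each triangle vertex. Each of $x,y,z$ then has at most $2+2=4$ forbidden colors, hence at least $3$ free ones. The added edges ensure that no assignment on the triangle creates a color three times in $N(x)$, $N(y)$ or $N(z)$, so coloring $x,y,z$ greedily with distinct colors works.

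Thus $G$ is triangle-free and $4$-regular. Every vertex and face then has charge $\deg-4\ge 0$, contradicting the total of $-8$. So the discharging target is simply ``no triangle at all'', not a condition such as every vertex lying on two triangles.
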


For planar graphs with $\Delta\ge 5$, Problem \ref{p:limited1} is widely open. In this paper, we resolve Problem \ref{p:limited1} for the case $\Delta=5$.

\begin{theorem}
\label{thm:Delta5}
For every planar graph $G$ with $\Delta(G)\le5$, $$\fd(G)\le 10.$$ 
\end{theorem}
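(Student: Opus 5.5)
We prove Theorem~\ref{thm:Delta5} by the discharging method applied to a minimal counterexample. Let $G$ be a planar graph with $\Delta(G)\le 5$ and $\fd(G)>10$, chosen with $|V(G)|+|E(G)|$ minimum. We may assume $G$ is connected, and we fix a plane embedding. For a vertex $v$, we call the colors forbidden at $v$ those used on neighbors of $v$, together with those used on two neighbors of some neighbor $u$ of $v$, i.e., colors already appearing twice in $N(u)$. Precoloring $v$ with any other color keeps the coloring $2$-frugal. When $v$ has degree $d$, a crude count gives at most $d+\sum_{u\in N(v)}\lfloor (\deg(u)-1)/2\rfloor\le d+2d$ forbidden colors. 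This already shows that deleting and recoloring a vertex of degree at most $3$ is cheap.

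\textbf{Step 1: reducible configurations.} First I show $\delta(G)\ge 4$, or at least that every vertex of degree $\le 3$ is heavily constrained. If $\deg(v)\le 3$, color $G-v$, or better $G'$ obtained from $G-v$ by adding edges among $N(v)$ where planarity and the degree bound permit. Then $v$ sees at most $3$ neighbor colors plus at most $3\cdot 2=6$ repeated colors, which is $9<10$, so $v$ can be colored. Adding the extra edges is needed so that the neighbors of $v$ do not create a third occurrence of a color around some $u$.

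Next I handle degree-$4$ vertices adjacent to low-degree vertices. I also handle short faces: triangles and $4$-faces containing vertices of degree $4$. The method is to delete an edge or contract a suitable edge, color by minimality, uncolor the critical vertices, and recolor greedily. In each case the key quantity is the number of colors forbidden at the uncolored vertices. Removing an edge $uv$ reduces the degrees of $u$ and $v$, so only the frugality constraints at $u$, $v$ and their neighbors must be rechecked. A typical target list of configurations is:
\begin{enumerate}[(i)]
\item a $4$-vertex incident with a $3$-face;
\item adjacent $4$-vertices;
\item a $5$-vertex incident with many $3$-faces and having $4$-neighbors.
\end{enumerate}
For each, I would compute the lists of available colors for the uncolored vertices. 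Where greedy fails, I would use a Hall-type or Alon--Tarsi argument on the small uncolored subgraph.

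\textbf{Step 2: discharging.} Assign charge $\deg(v)-4$ to vertices and $\deg(f)-4$ to faces, so the total is $-8$ by Euler's formula. Only triangles are negative, with charge $-1$. $5$-vertices have charge $+1$, and faces of size $\ge5$ have positive charge. I would use the rule that each $5$-vertex sends $1/3$ to each incident triangle, and each face of size $\ge 5$ sends charge to incident triangles or to vertices as needed. By the reducible configurations, every triangle has all vertices of degree $5$ and receives $3\cdot 1/3=1$. A $5$-vertex is incident with at most $5$ triangles, so the rule as stated would overspend. The rules must therefore be refined: a $5$-vertex incident with $4$ or $5$ triangles must be shown reducible, or must receive help from the structure around it. This yields nonnegative final charge everywhere, contradicting the total of $-8$.

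\textbf{Main obstacle.} The hard part is the reducibility of dense configurations of $5$-vertices, such as a $5$-vertex all of whose faces are triangles (a local icosahedral structure). The naive count gives $5+5\cdot 2=15$ forbidden colors, far above $10$. Here I would first try deleting an edge $uv$ inside a triangle so that both endpoints get degree $4$. I would then exploit that in the $2$-frugal condition each neighbor contributes only the colors repeated in its neighborhood. Uncoloring $u$ and $v$ then leaves lists large enough to be colorable after careful counting. The delicate point is the case where both recolorings interact through common neighbors.
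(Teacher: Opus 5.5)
\textbf{Verdict: genuine gap.} Your framework matches the paper's: minimal counterexample, forbidden-color counting, and charge $\deg-4$ so that only $3$-faces are negative. Your degree-$\le 3$ reduction is exactly the paper's first claim. Beyond that, however, the discharging rests on configurations you never prove, and the central one does not yield to the counting you describe.

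\textbf{Triangles with a $4$-vertex.} Your uniform rule ($\frac{1}{3}$ from each $5$-vertex to each incident triangle) needs ``every triangle has all vertices of degree $5$'', i.e.\ your configuration (i). You give no argument for it, and the natural ones fail.
\begin{itemize}
\item Let $x$ be a $4$-vertex in a triangle $xyz$ with $\deg y=\deg z=5$. Deleting $x$ leaves up to $4+4\cdot 2=12$ forbidden colors at $x$.
\item You cannot add edges among the other neighbors of $y$ or $z$ to suppress their repeated colors, because those vertices did not lose degree.
\item Deleting the whole triangle instead leaves up to $9$ forbidden colors at each of $y$ and $z$. Once $y$ is colored, $z$ may have no free color left.
\end{itemize}
The paper does not reduce this configuration. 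It proves only three weaker facts: every triangle has at least two $5$-vertices; a $4$-vertex lies in at most one triangle; and the two $5$-vertices of a triangle containing a $4$-vertex each lie in at most two triangles. So each of those two vertices sends that triangle at least $\frac{1}{2}$, whereas your rule gives it only $\frac{2}{3}$ in total. Configuration (ii) is irrelevant, since $4$-vertices carry zero charge, and faces of size $\ge 5$ need not send anything.

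\textbf{Dense $5$-vertices.} The case you flag as the main obstacle is left open, and your edge-deletion idea does not settle it. Take $u$ the center of a $5$-wheel with rim cycle $v w a b w'$, delete $uv$, and uncolor $u$ and $v$. Then $u$ still faces $4$ neighbor colors plus up to $1+1+2+2$ repeated colors, i.e.\ $10$ forbidden colors.

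The paper never reduces a $5$-wheel center. Its basic tool, used in every reduction, is to delete a set $X$ and join two outside neighbors of each deleted vertex by a new edge. This is legal because those neighbors lost degree. It forces them to get distinct colors, so the frugality condition at the deleted vertex no longer restricts the colors of the other deleted vertices. With this tool the paper shows:
\begin{itemize}
\item all diamond vertices have degree $5$;
\item there is no $K_4$ and no $4$-wheel;
\item no $5$-vertex lies in exactly four triangles;
\item every neighbor of a $5$-wheel center lies in exactly two triangles.
\end{itemize}
The last claim lets the rim vertices pay $\frac{1}{2}$ to each wheel triangle while the center pays nothing. A $5$-vertex in three triangles pays $\frac{1}{4},\frac{1}{4},\frac{1}{2}$ according to its diamond structure, and one in one or two triangles pays $1$ or $\frac{1}{2}$ per triangle. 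Together these give every $3$-face at least $1$. Without these structural claims, or substitutes for them, your discharging does not close.
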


Section~\ref{sec:Delta4} is devoted to the proof of Theorem~\ref{thm:Delta4}, and Section~\ref{sec:Delta5} contains the proof of Theorem~\ref{thm:Delta5}. In the next paragraph we give some basic definitions needed throughout the paper.

In this paper, we consider finite, simple graphs. Given a graph $G$ and a vertex $v\in V(G)$, the {\em neighborhood} of $v$ in $G$ is $N_G(v)=\{u\in V(G):\, uv\in E(G)\}$ and its {\em closed neighborhood} is $N_G[v]=N_G(v)\cup\{v\}$.
The {\em degree}, $\deg_G(v)$, of $v$ in $G$ is $|N_G(v)|$, while $\delta(G)$ and $\Delta(G)$ are the minimum, resp.\ the maximum degree of vertices in $G$. 
We denote the order of a graph $G$ by $n(G)$. For convenience, let $[n]=\{1,\ldots,n\}$.

\section{Proof of Theorem~\ref{thm:Delta4}}
\label{sec:Delta4}

The proof is by a contradiction, assuming that there exists a counter-example. Let $G$ be a minimum counter-example to the statement of the theorem with respect to the order of $G$. Thus, $G$ is a planar graph with $\Delta(G)\le 4$ such that $\fd(G)>7$, while every planar graph $G'$ with order $n(G')<n(G)$ has $\fd(G')\le 7$. First, we will prove two claims, which restrict the structure of the counter-example $G$, and then, by using a simple discharging argument, we will obtain a contradiction with $G$ being planar.

The following proof technique will be used in all proofs of our claims.
A set $X$, where $X\subset V(G)$, will be removed from $G$ so that $G'=G-X$ has smaller order and thus admits a $2$-frugal coloring $c$. Our goal is to extend $c$ from $G'$ to $G$, and while doing so, some of the colors cannot be used, and we call them forbidden colors. If $x\in X$, then the colors $\{c(y):\, y\in N_G(x)\cap V(G')\}$ are forbidden for $x$, since we want $c$ to be a proper coloring of $G$. In addition, for every $y\in N_G(x)$, any eventual color $\alpha\in [7]$ that appears twice in $N_G(y)\cap V(G')$ is forbidden for $x$, since assigning $c(x)=\alpha$ would result in the neighborhood of $y$ in $G$ having three vertices with color $\alpha$. Any color that is not forbidden for $x$ is called a free color for $x$. Now, assigning a free color to every vertex of $X$ implies that each vertex $y$ in $G'$ will be properly colored with respect to $2$-frugal coloring condition; notably neighbors of $y$ in $G$ will receive a different color from $c(y)$ and each color will appear at most two times in $N_G(y)$. However, to ensure that each vertex $x$ in $X$ will be properly colored with respect to $2$-frugal coloring condition we need to use a free color for every such vertex and additionally prevent ``collisions'' between vertices in $X$. Thus, while coloring vertices of $X$ one at a time, the number of free colors of not yet colored vertices in $X$ may need to be further reduced. 

\begin{claim} $\delta(G)\ge 4$.
\label{cl:1.1}
\end{claim}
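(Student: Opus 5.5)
Suppose for contradiction that $G$ has a vertex $v$ with $\deg_G(v)\le 3$. We take $X=\{v\}$ and $G'=G-v$. Then $G'$ is planar with $\Delta(G')\le 4$ and $n(G')<n(G)$, so by minimality of $G$ it has a $2$-frugal coloring $c$ with colors from $[7]$. The plan is to count forbidden colors for $v$ and show that at least one free color remains. Assigning that color then extends $c$ to a $2$-frugal coloring of $G$, which is a contradiction.

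If $v$ is isolated, any color works. Otherwise let $N_G(v)=\{u_1,\dots,u_k\}$ with $k\le 3$. The forbidden colors for $v$ are of two kinds.
\begin{enumerate}[(i)]
\item The colors $c(u_1),\dots,c(u_k)$, which are at most $k$ colors.
\item For each $u_i$, the colors appearing twice in $N_G(u_i)\setminus\{v\}=N_{G'}(u_i)$. Since $\deg_{G'}(u_i)\le 3$, at most one color appears twice there, so each $u_i$ forbids at most one further color.
\end{enumerate}
Altogether at most $2k\le 6<7$ colors are forbidden, so $v$ has a free color $\alpha$, and we set $c(v)=\alpha$.

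It remains to check that this extended coloring is $2$-frugal on $G$, and here lies the point to watch. As noted in the general discussion of the technique, coloring $v$ with a free color keeps every $u_i$ fine: $\alpha\ne c(u_i)$, and $\alpha$ appears at most twice in $N_G(u_i)$. Vertices outside $N_G[v]$ are unaffected. We must also verify the condition at $v$ itself. Properness holds because $\alpha\notin\{c(u_i)\}$. The frugality condition at $v$ requires that no color appear three times among $c(u_1),\dots,c(u_k)$, which the counting above does not address. For $k\le 2$ this is automatic. For $k=3$ it could fail if $u_1,u_2,u_3$ all received the same color in $c$.

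We handle this by recoloring one neighbor first. Suppose $c(u_1)=c(u_2)=c(u_3)=\beta$. The condition at $u_1$ in $G$ involves $N_{G'}(u_1)\cup\{v\}$. We count colors forbidden for $u_1$ in $G'$, treating $v$ as a vertex colored in the future: the colors of its at most $3$ other neighbors, plus at most one doubled color for each such neighbor (each has degree at most $4$ in $G'$, so at most two colors appear twice around it). This gives up to $3+3\cdot 2=9$ forbidden colors, which exceeds $7$, so this crude count is too weak.

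The better approach is to bound the count for $v$ more carefully in the case where all three neighbors share the color $\beta$. In that case (i) contributes only $1$ color, so at most $1+3=4$ colors are forbidden for $v$ and at least $3$ remain free. This alone does not fix the triple at $v$, so instead we recolor $u_1$ within $G'\cup\{v\}$ while leaving $v$ uncolored, choosing a color different from $\beta$. The neighbors $w$ of $u_1$ in $G'$ (at most $3$) forbid their colors and the at most two colors doubled around each, up to $9$ colors, which is the same obstacle.

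Since $\beta$ is a single color, the simplest fix is a pre-choice: I would first apply minimality to $G''=G'+u_1u_2$ when this keeps planarity and degree at most $4$, which forces $c(u_1)\ne c(u_2)$. Otherwise I would contract $v$ onto a neighbor. I expect this $k=3$ triple-color issue to be the main obstacle, and I would try the edge-addition trick first, using planarity to choose two consecutive neighbors $u_1,u_2$ in the embedding; these can be joined by an edge inside a face at $v$ after deleting $v$. When $\deg_G(u_1)=4$, adding this edge would raise its degree to $5$, so I would delete $v$ and add $u_1u_2$ only if both $u_1,u_2$ have degree at most $3$ in $G'$, which holds since each lost the edge to $v$. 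Then $\Delta(G'')\le 4$, $G''$ is planar, and the count above goes through.
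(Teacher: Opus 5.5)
Your final argument is correct and is essentially the paper's proof. You delete $v$ and add the edge $u_1u_2$, which is allowed because $u_1,u_2$ have degree at most $3$ in $G-v$ and lie on a common face there. This forces $c(u_1)\neq c(u_2)$, so no color can occur three times in $N_G(v)$, and at most $3+3\cdot 1=6$ of the $7$ colors are forbidden for $v$.

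The detours should be cut from the write-up: recoloring $u_1$, the contraction fallback, and the remark that the new edge could raise a degree to $5$. You also treat degrees $0,1,2$ explicitly, whereas the paper handles only degree $3$.
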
 
\begin{proof}
Suppose $x$ is a vertex of degree $3$ in $G$ and let $N_G(x)=\{x_1,x_2,x_3\}$. Let $X_i=N_G(x_i)\setminus\{x\}$, and note that $|X_i|\le 3$ for all $i\in [3]$. Consider $G'$ obtained from $G-x$ by possibly adding an edge between $x_1$ and $x_2$ (if $x_1x_2\notin E(G)$). Since $G'$ has smaller order than $G$ and $\Delta(G')\le 4$, there exists a $2$-frugal coloring $c$ of $G'$ using 7 colors. Note that one can extend $c$ to a proper coloring of $G$ by assigning to $x$ a color not in $\{c(x_1),c(x_2),c(x_3)\}$. Now, $c$ is also $2$-frugal, unless $c(x)$ coincides with a color that appears twice in some $X_i$. Note that there are at most three forbidden colors that arise from $|N_G(x)|=3$, and for each $i\in [3]$ there is only one forbidden color that arises from $|X_i|\le 3$ and potential repetitions of colors in $N_G(x_i)$. Altogether, we infer there are at most $3+3\times 1=6$ forbidden colors for the vertex $x$. (See also Fig.~\ref{fig:cl:1.1} depicting a possible situation that can appear with the coloring in $G'$; note that in each neighborhood $X_i$ there is a vertex whose color does not influence the resulting color for $x$ and thus the colors of these vertices are omitted in the picture.) Thus, there is at least one free color for $x$, and so $c$ can be extended to $G$, resulting in a $2$-frugal coloring of $G$, a contradiction.
\end{proof}

\begin{figure}[ht!]
\begin{center}
	\begin{tikzpicture}[scale=1]
	\draw (-2,0)--(0,0) -- (2,0);
	\draw (0,0)--(0,2);
        \draw (3,-1)--(2,0)--(3,0);
        \draw (2,0)--(3,1);
         \draw (-3,-1)--(-2,0)--(-3,0);
        \draw (-2,0)--(-3,1);
         \draw (-1,3)--(0,2)--(0,3);
        \draw (0,2)--(1,3);
         \draw[dotted, line width=1.5pt] (0,2)--(2,0);
	
    
	\draw [fill=black] (2, 0) circle (2.5pt) node[above] {$1$} node[below] {$x_2$};
    \draw [fill=white] (0, 0) circle (2.5pt);
	\draw [fill=black] (-2, 0) circle (2.5pt) node[above] {$3$} node[below] {$x_3$};
	\draw [fill=black] (0, 2) circle (2.5pt) node[left] {$2$} node[right] {$x_1$};
	\draw [fill=black] (3, 0) circle (2.5pt) node[above] {$4$};
    \draw [fill=black] (3, 1) circle (2.5pt) node[above] {$4$};
       \draw [fill=black] (-3, -1) circle (2.5pt) node[above] {$5$};
      \draw [fill=black] (-3, 0) circle (2.5pt) node[above] {$5$};
\draw [fill=black] (-3, 1) circle (2.5pt) ;
\draw [fill=black] (3, -1) circle (2.5pt) ;
\draw [fill=black] (-1, 3) circle (2.5pt) ;
    \draw [fill=black] (1, 3) circle (2.5pt) node[above] {$6$};
	\draw [fill=black] (0,3) circle (2.5pt) node[above] {$6$};
       \draw (0,-0.2) node{$x$};

\draw[dashed] (0,0) circle (1);
	\end{tikzpicture} \\
	\caption{A possible coloring of some vertices in $G'$ in Claim~\ref{cl:1.1}. 
    Dashed circle describes the removal of $x$, while the dotted edge is added in $G'$ (if it is not already in $G$). Note that in this example, there is exactly one free color at $x$.}\label{fig:cl:1.1} 
\end{center}
\end{figure}

\begin{claim} There is no triangle in $G$.
\label{cl:1.2}
\end{claim}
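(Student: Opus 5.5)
The plan is to reuse the reduction of Claim~\ref{cl:1.1}, combined with a recoloring step. By Claim~\ref{cl:1.1}, together with $\Delta(G)\le 4$, the graph $G$ is $4$-regular. Suppose $xyz$ is a triangle and let $N_G(x)=\{y,z,u,w\}$.

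Set $G'=G-x$, adding the edge $uw$ if it is not already present. To keep $G'$ planar I would choose the labelling so that $u$ and $w$ lie on a common face of $G-x$; they are consecutive around $x$ in the rotation, or else I use $y$ and $z$ as the pair. Then $G'$ is planar with $\Delta(G')\le 4$ and $n(G')<n(G)$, so it has a $2$-frugal $7$-coloring $c$. Because $yz,uw\in E(G')$, we have $c(y)\neq c(z)$ and $c(u)\neq c(w)$. Hence, whatever color $x$ receives, no color appears more than twice in $N_G(x)$, and the frugality condition at $x$ is automatic. Deleting the auxiliary edge $uw$ only shrinks neighbourhoods, so it causes no harm.

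Next I count the forbidden colors for $x$. There are the four colors $c(y),c(z),c(u),c(w)$. In addition, each neighbour $v$ of $x$ has at most one color repeated in $N_G(v)\setminus\{x\}$, a set of size $3$. The triangle gives a saving here. Write $N_G(y)=\{x,z,a,b\}$. A color repeated in $\{z,a,b\}$ is either $c(z)$, which is already forbidden, or $c(a)=c(b)$. So $y$ contributes a new forbidden color only when $c(a)=c(b)\notin\{c(y),c(z),c(u),c(w)\}$, and the same holds for $z$. The naive bound is therefore $4+4=8$, and the triangle alone does not push it below $7$.

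The main obstacle is the extremal case in which exactly $7$ colors are forbidden and they are pairwise distinct. In that case $N_G(y)\setminus\{x,z\}$ is monochromatic with a color $\beta_y$ that does not appear elsewhere among the forbidden colors, and similarly for $z$, $u$ and $w$. I would then recolor $y$ before coloring $x$. With $x$ still uncolored, the colors forbidden for $y$ are $c(z)$ and $\beta_y$ (the colors of its neighbours), plus at most one repeated color from each of $N(z)\setminus\{y\}$, $N(a)\setminus\{y\}$ and $N(b)\setminus\{y\}$. That is at most $5$ forbidden colors, so $y$ has at least two free colors. I would pick one different from $c(y)$, chosen so that $c(y)$ becomes free for $x$, and then check that $x$ indeed gains a free color. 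If this fails, I would try recoloring $u$ instead, or swapping the colors of $y$ and $z$; the tightness of the extremal configuration should make one of these options succeed. In every case this gives a $2$-frugal $7$-coloring of $G$, contradicting the choice of $G$. The delicate point is verifying that recoloring $y$ does not create a new repetition at $z$, $a$ or $b$ involving the future color of $x$.
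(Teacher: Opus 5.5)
Your proposal has a genuine gap. Deleting only $x$ leaves up to eight forbidding constraints: four neighbor colors, plus one repeated color from each $N_G(v)\setminus\{x\}$ with $v\in N_G(x)$. These fall on a palette of seven colors, as you note yourself, so everything rests on the recoloring step, and that step is not carried out.

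Your extremal case is also misidentified. It has four distinct colors on $y,z,u,w$ plus a fresh repeated color $\beta_v$ for each of the four neighbors, which needs eight colors and cannot occur. The real bad configurations are all those in which the at most eight forbidden colors cover $[7]$. One example is $c(y)=c(u)$ together with four new distinct colors $\beta_y,\beta_z,\beta_u,\beta_w$.

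In such cases recoloring $y$ does not free its old color for $x$, since it is still the color of $u$. The same failure occurs if the old $c(y)$ is the repeated color in $N_G(w)\setminus\{x\}$. If $y$ is adjacent to $u$ or $w$ (another triangle at $x$), recoloring $y$ also changes their repeated colors. Your bound of five forbidden colors for $y$ relies on $c(a)=c(b)$, which holds only in your special configuration. In general $y$ may have six forbidden colors, so its current color is its only free one and it cannot be recolored at all. The fallbacks (``recolor $u$ instead, or swap $y$ and $z$'') are asserted without verification.

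The missing idea is to delete the whole triangle rather than one of its vertices. Take $G'=G-\{x,y,z\}$ and add the edges $x_1x_2$, $y_1y_2$, $z_1z_2$ between the two outside neighbors of each triangle vertex; degrees stay at most $4$. Each of $x,y,z$ now has only two precolored neighbors. Each of those contributes at most one repeated color from a set of size $3$, so at most $2+2=4$ colors are forbidden and at least three are free.

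The added edges force $c(x_1)\neq c(x_2)$, and similarly for $y$ and $z$. The triangle vertices receive distinct colors anyway, so frugality at $x,y,z$ is automatic. Coloring $x$, then $y$, then $z$ costs each later vertex at most one free color per step, namely the color just used. So the greedy extension succeeds with no recoloring.
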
 
\begin{proof}
Suppose to the contrary that $x,y,z$ are the vertices of a triangle in $G$. By Claim \ref{cl:1.1}, $G$ is $4$-regular. Hence $\deg_G(x)=\deg_G(y)=\deg_G(z)=4$. Consider $G''=G-\{x,y,z\}$, and let $N_G(x) \cap V(G'')=\{x_1,x_2\}$, $N_G(y) \cap V(G'')=\{y_1,y_2\}$, and $N_G(z) \cap V(G'')=\{z_1,z_2\}$. Further let $X_i=N_G(x_i)\setminus\{x\}$, $Y_i=N_G(y_i)\setminus\{y\}$ and $Z_i=N_G(z_i)\setminus\{z\}$ for all $i \in [2]$. Now let $G'$ be obtained from graph $G''$ by adding the edges $x_1x_2$, $y_1y_2$ and $z_1z_2$ if they are not already in $G$ (see Fig. \ref{fig:cl:1.2}). Since $G'$ has smaller order than $G$ and $\Delta(G')\le 4$, there exists a $2$-frugal coloring $c$ of $G'$ using $7$ colors.

In the next step we count the number of free colors at vertices $x$, $y$ and $z$ in $G$. Note that $|X_i| \le 3$ for each $i\in [2]$, which together with colors $c(x_1)$ and $c(x_2)$ adds up to at most $2 \times 1 + 2=4$ forbidden colors for $x$. Since $y_1y_2 \in E(G')$ and $z_1z_2 \in E(G')$, we have $c(y_1) \ne c(y_2)$ and $c(z_1) \ne c(z_2)$, thus regardless of the color assigned to $x$, no three vertices in $N_G(y)$ or $N_G(z)$ will receive the same color. Hence, $x$ indeed has at least three free colors. By symmetry, we derive that 
 $y$ as well as $z$ has at least three free colors. 
Therefore, one can easily extend $c$ to a $2$-frugal coloring of $G$, a contradiction. 
\end{proof}

\begin{figure}[ht!]
\begin{center}
	\begin{tikzpicture}[scale=1]
	\draw (-1.5,0)--(1.5,0)--(0,2.2)--(-1.5,0);
    \draw (-1.5,0)--(-2.9,0);
	     \draw (-1.5,0)--(-2.5,-1);
	  \draw (1.5,0)--(2.9,0);
	     \draw (1.5,0)--(2.5,-1);
         \draw[dotted, line width=1.5pt] (2.9,0)--(2.5,-1);
       \draw[dotted, line width=1.5pt] (-2.9,0)--(-2.5,-1);

        \draw (-3.7,0.15)--(-2.9,0);
         \draw (-3.6,0.4)--(-2.9,0);
     \draw (-3.7,-0.15)--(-2.9,0);

         \draw (-3.4,-1.2)--(-2.5,-1);
     \draw (-3.3,-1.4)--(-2.5,-1);
         \draw (-3.1,-1.55)--(-2.5,-1);

            \draw (3.7,0.15)--(2.9,0);
         \draw (3.6,0.4)--(2.9,0);
     \draw (3.7,-0.15)--(2.9,0);

         \draw (3.4,-1.2)--(2.5,-1);
     \draw (3.3,-1.4)--(2.5,-1);
         \draw (3.1,-1.55)--(2.5,-1);

 \draw (0,2.2)--(-0.9,3.2);
   \draw (0,2.2)--(0.9,3.2);
         \draw[dotted, line width=1.5pt] (-0.9,3.2)--(0.9,3.2);

          \draw (-1.2,4)--(-0.9,3.2);
 \draw (-1.4,3.8)--(-0.9,3.2);
          \draw (-1.6,3.6)--(-0.9,3.2);
                      
  \draw (1.2,4)--(0.9,3.2);
 \draw (1.4,3.8)--(0.9,3.2);
          \draw (1.6,3.6)--(0.9,3.2);


    	\draw [fill=white] (1.5, 0) circle (2.5pt);
    \draw [fill=white] (-1.5, 0) circle (2.5pt);
    \draw [fill=white] (0, 2.2) circle (2.5pt);
    \draw [fill=black] (2.9, 0) circle (2.5pt);
    \draw [fill=black] (2.5, -1) circle (2.5pt);
     \draw [fill=black] (-2.9, 0) circle (2.5pt);
    \draw [fill=black] (-2.5, -1) circle (2.5pt);
    \draw [fill=black] (-0.9, 3.2) circle (2.5pt);
      \draw [fill=black] (0.9, 3.2) circle (2.5pt);
        
         \draw (-1.1,0.2) node{$z$};
       \draw (1.1,0.2) node{$y$};
        \draw (0,1.9) node{$x$};
          \draw (-1,2.85) node{$x_1$};
\draw (1,2.85) node{$x_2$};
\draw (-2.9,.3) node{$z_2$};
\draw (-2.1,-1) node{$z_1$};
\draw (2.9,.3) node{$y_1$};
\draw (2.1,-1) node{$y_2$};

\draw[dashed] (0,0.7) circle (1.9);
	\end{tikzpicture} \\
	\caption{The situation in Claim~\ref{cl:1.2}. Dashed circle describes the removal of the triangle, while dotted edges are added in $G'$ (if they are not already there in $G$). }\label{fig:cl:1.2}
\end{center}
\end{figure}

We finish the proof of Theorem \ref{thm:Delta4} by using a basic discharging argument.
Letting $F(G)$ be the set of faces of $G$, we assign the initial charge $\mu(z)$ for each element $z\in V(G)\cup F(G)$ to be $\deg_G(z)-4$ (note that the degree of a face is defined as the number of its boundary edges (or vertices)). By Euler's formula the initial total charge of the graph, which is the sum of all charges, equals
$$\sum_{v\in V(G)}{(\deg_G(v)-4)}+\sum_{f\in F(G)}{(\deg_G(f)-4)}=2|E(G)|-4|V(G)|+2|E(G)|-4|F(G)|=-8.$$

By Claims \ref{cl:1.1} and \ref{cl:1.2}, $\delta(G)=4$ and all the faces of $G$ have lengths at least $4$, we have

$$\sum_{v\in V(G)}{(\deg_G(v)-4)}+\sum_{f\in F(G)}{(\deg_G(f)-4)}\ge 0,$$ which is a contradiction. The proof is complete.

\section{Proof of Theorem~\ref{thm:Delta5}}
\label{sec:Delta5}

The proof follows similar steps and uses techniques similar to, though somewhat more complex than, those in the proof of Theorem \ref{thm:Delta4}. Suppose to the contrary that there exists a counter-example to the theorem, and let $G$ be a counter-example with minimum order. Thus, $G$ is a planar graph with $\Delta(G)\le 5$ such that $\fd(G)>10$, yet every planar graph $G'$ with order $n(G')<n(G)$ has $\fd(G')\le 10$. In the sequel, we will prove a sequence of claims, which restrict the structure of the counter-example $G$. In the end, by using a discharging argument we will obtain a contradiction with $G$ being planar. 


\begin{claim} $\delta(G)\ge 4$.
\label{cl:1}
\end{claim}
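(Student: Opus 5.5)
We argue as in Claim~\ref{cl:1.1}: assume $G$ has a vertex $x$ of degree $d\le 3$ and derive a contradiction by extending a $2$-frugal $10$-coloring of a smaller graph. Let $N_G(x)=\{x_1,\dots,x_d\}$ and $X_i=N_G(x_i)\setminus\{x\}$, so $|X_i|\le 4$ for all $i$. The auxiliary graph $G'$ will be $G-x$ with extra edges added between the neighbors of $x$, so that the coloring of $G'$ automatically controls collisions inside $N_G(x)$. For $d=3$ we add edges $x_1x_2$, $x_2x_3$, $x_1x_3$ (those not already present). Routing them inside the face of $G-x$ that contained $x$ keeps the graph planar, and each $x_i$ loses the edge to $x$ while gaining at most two new edges. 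This could push $\deg(x_i)$ to $6$, so we must be careful: if some $x_i$ has degree $5$ in $G$, we add only one edge at it, or none.

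Once $G'$ is fixed, minimality of $G$ gives a $2$-frugal coloring $c$ of $G'$ with $10$ colors. Restricted to $G-x$, $c$ is still $2$-frugal, since deleting edges never violates the condition and neighborhoods only shrink. We then count forbidden colors for $x$. First, the colors $c(x_i)$ are forbidden, giving at most $d$ colors. Second, for each $i$, any color appearing twice in $X_i$ is forbidden; since $|X_i|\le 4$, at most $2$ such colors arise per neighbor. For $d\le 2$ this gives at most $2+2\cdot 2=6<10$ forbidden colors. In that case we need no added edges at all for properness, because with at most two neighbors of $x$ no color can appear three times in $N(x)$. So the cases $d\le 2$ are immediate with $G'=G-x$.

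The main case is $d=3$. Naively, $3+3\cdot 2=9$ colors are forbidden, which still leaves one free color. The remaining danger is the frugality condition at $x$ itself, namely all three of $x_1,x_2,x_3$ receiving the same color. This needs only one added edge, say $x_1x_2$, so that $c(x_1)\neq c(x_2)$. We choose the pair so that both endpoints have degree at most $4$ in $G$; then in $G'$ their degree stays at most $4$. If every pair contains a degree-$5$ vertex, then at least two of the $x_i$ have degree $5$. Suppose $\deg_G(x_1)=\deg_G(x_2)=5$. Adding $x_1x_2$ still keeps their degree at $5$ in $G'$, because each lost the edge $x_ix$ and gained exactly one edge. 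So we can always add the single edge $x_1x_2$ inside the face of $G-x$ that contained $x$, and $\Delta(G')\le 5$ with $G'$ planar.

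The step needing care is to verify that this added edge does not create a multi-edge issue (if $x_1x_2$ already exists, we add nothing) and that the counting is sharp enough. There is also a subtlety to check: after coloring $x$, the extension is $2$-frugal at each $x_i$ because the forbidden doubled colors in $X_i$ were excluded. It is $2$-frugal at $x$ because $c(x_1)\ne c(x_2)$. The coloring is proper because the $c(x_i)$ were excluded. At most $9$ of the $10$ colors are forbidden, so a free color exists, contradicting the choice of $G$. Hence $\delta(G)\ge 4$.
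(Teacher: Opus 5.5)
Your proposal is correct and is essentially the paper's argument: delete $x$, add the single edge $x_1x_2$ if it is missing, and count at most $3+3\cdot 2=9$ forbidden colors among the $10$. Two side remarks: the detour through three added edges and through choosing a pair that avoids degree-$5$ endpoints is unnecessary, since each endpoint loses $x$ and gains only one neighbor, so any pair keeps $\Delta(G')\le 5$; and your explicit treatment of $d\le 2$ fills in a case the paper leaves implicit.
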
 
\begin{proof}
Suppose $x$ is a vertex of degree $3$ in $G$ and let $N_G(x)=\{x_1,x_2,x_3\}$. Let $Y_i=N(x_i)\setminus\{x\}$, and note that $|Y_i|\le 4$ for all $i\in [3]$. Consider $G'$ obtained from $G-x$ by possibly adding an edge between $x_1$ and $x_2$ (if $x_1x_2\notin V(G)$). Since $G'$ has smaller order than $G$, there exists a $2$-frugal coloring $c$ of $G'$ using 10 colors. Note that one can extend $c$ to a proper coloring of $G$ by assigning to $x$ a color not in $\{c(x_1),c(x_2),c(x_3)\}$. Now, $c$ is also $2$-frugal, unless $c(x)$ coincides with a color that appears twice in some $Y_i$. Note that there are at most three forbidden colors that arise from $|N_G(x)|=3$, and for each $i\in [3]$ there are most two forbidden colors that arise from $|Y_i|=4$ and potential repetitions of colors in $N_G(x_i)$. Altogether, we infer there are at most $3+3\times 2=9$ forbidden colors for $c(x)$ (see also Fig.~\ref{fig:cl:1} depicting a possible situation that can appear with the coloring in $G'$). Thus, there is at least one free color for $x$, and so $c$ can be extended to $G$, resulting in a $2$-frugal coloring of $G$ (using ten colors), a contradiction.
\end{proof}

\begin{figure}[ht!]
\begin{center}
	\begin{tikzpicture}[scale=1]
	\draw (-2,0)--(0,0) -- (2,0);
	\draw (0,0)--(0,2);
        \draw (3,-0.5)--(2,0)--(3,-1.5);
        \draw (3,0.5)--(2,0)--(3,1.5);
         \draw (-3,-0.5)--(-2,0)--(-3,-1.5);
        \draw (-3,0.5)--(-2,0)--(-3,1.5);
         \draw (-1.5,3)--(0,2)--(-0.5,3);
        \draw (1.5,3)--(0,2)--(0.5,3);
         \draw[dotted, line width=1.5pt] (0,2)--(2,0);
	
    
	\draw [fill=black] (2, 0) circle (2.5pt) node[above] {$1$}; 
    \draw [fill=white] (0, 0) circle (2.5pt);
	\draw [fill=black] (-2, 0) circle (2.5pt) node[above] {$2$}; 
	\draw [fill=black] (0, 2) circle (2.5pt) node[left] {$3$}; 
	\draw [fill=black] (3, -0.5) circle (2.5pt) node[above] {$4$};
\draw [fill=black] (3, 0.5) circle (2.5pt) node[above] {$5$};
       \draw [fill=black] (-3, -0.5) circle (2.5pt) node[above] {$6$};
      \draw [fill=black] (-3, 0.5) circle (2.5pt) node[above] {$6$};
\draw [fill=black] (-3, 1.5) circle (2.5pt) node[above] {$7$};
\draw [fill=black] (-3, -1.5) circle (2.5pt) node[above] {$7$};
\draw [fill=black] (3, 1.5) circle (2.5pt) node[above] {$4$};
\draw [fill=black] (3, -1.5) circle (2.5pt) node[above] {$5$};
\draw [fill=black] (-1.5, 3) circle (2.5pt) node[above] {$8$};
	\draw [fill=black] (-0.5,3) circle (2.5pt) node[above] {$8$};
    \draw [fill=black] (1.5, 3) circle (2.5pt) node[above] {$9$};
	\draw [fill=black] (0.5,3) circle (2.5pt) node[above] {$9$};
       \draw (0,-0.2) node{$x$};
       \draw (0.4,2) node{$x_1$};
       \draw (1.7,-0.2) node{$x_2$};
       \draw (-1.7,-0.2) node{$x_3$};

\draw[dashed] (0,0) circle (1);
	\end{tikzpicture} \\
	\caption{A possible coloring of some vertices in $G'$ in Claim~\ref{cl:1}; there is only one free color at $x$.}\label{fig:cl:1}
\end{center}
\end{figure}

\begin{claim} Every triangle in $G$ has at least two vertices of degree $5$ in $G$.
\label{cl:2}
\end{claim}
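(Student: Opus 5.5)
We argue by contradiction, in the same way as in Claims~\ref{cl:1.2} and~\ref{cl:1}. Suppose $xyz$ is a triangle of $G$ with at most one vertex of degree~$5$. By Claim~\ref{cl:1}, two of its vertices, say $x$ and $y$, have degree exactly~$4$. Write $N_G(x)=\{y,z,x_1,x_2\}$ and $N_G(y)=\{x,z,y_1,y_2\}$.

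\textbf{Reduced graph.} Let $G''=G-\{x,y\}$ and obtain $G'$ from $G''$ by adding the edges $x_1x_2$ and $y_1y_2$ whenever they are absent. These edges do not raise the maximum degree: $x_1,x_2$ each lose the neighbor $x$ and gain at most one edge, and similarly for $y_1,y_2$. The case where some $x_i$ coincides with some $y_j$ is handled separately below.

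\textbf{Planarity of $G'$.} After deleting $x$ and $y$, all of $z,x_1,x_2,y_1,y_2$ lie on the boundary of one face. In a fixed embedding we label $x_1,x_2$ according to the rotation at $x$ and $y_1,y_2$ according to the rotation at $y$, so that along this face they appear in the cyclic order $z,x_1,x_2,y_1,y_2$. Then the two chords $x_1x_2$ and $y_1y_2$ do not cross, and $G'$ is planar. This embedding bookkeeping is the step I expect to need the most care, including the degenerate cases where some $x_i=y_j$ or some of these edges already exist. In those cases we add fewer edges, or none, and the counting below only improves.

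\textbf{Counting free colors.} By minimality of $G$, the graph $G'$ has a $2$-frugal coloring $c$ with $10$ colors. The forbidden colors for $x$ are as follows.
\begin{enumerate}[(i)]
\item The colors $c(z),c(x_1),c(x_2)$: at most $3$ colors.
\item For each $i\in[2]$, the set $N_G(x_i)\setminus\{x\}$ has at most $4$ vertices, so at most $2$ colors appear twice in it. This gives at most $4$ colors.
\item The set $N_G(z)\setminus\{x,y\}$ has at most $3$ vertices, so at most $1$ color appears twice in it.
\end{enumerate}
No constraint comes from $y$'s neighborhood, because $c(y_1)\neq c(y_2)$ by the added edge and $c(z)$ is already forbidden. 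Altogether at most $3+4+1=8$ colors are forbidden for $x$, so $x$ has at least $2$ free colors. Symmetrically, $y$ has at least $2$ free colors.

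\textbf{Extending the coloring.} Color $x$ with a free color, and then color $y$ with a free color different from $c(x)$; such a color exists since $y$ has at least $2$ free colors. We check that the result is $2$-frugal.
\begin{itemize}
\item In $N_G(z)$, the new vertices $x$ and $y$ receive distinct colors, and each avoids the colors that already appear twice in $N_G(z)\setminus\{x,y\}$. Hence no color appears three times in $N_G(z)$.
\item In $N_G(x)=\{y,z,x_1,x_2\}$, the colors satisfy $c(x_1)\neq c(x_2)$ by the added edge and $c(y)\neq c(z)$ since $yz\in E(G)$. So no color appears three times.
\item The same argument applies to $N_G(y)$.
\item Every other vertex is protected by the definition of free colors.
\end{itemize}
Thus $c$ extends to a $2$-frugal $10$-coloring of $G$, a contradiction.
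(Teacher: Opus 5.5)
Your argument is correct, and it is a lighter reduction than the paper's.

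\textbf{Comparison with the paper.} The paper fixes $\deg_G(x)=5$ and $\deg_G(y)=\deg_G(z)=4$, deletes all three vertices of the triangle, and adds the three edges $x_1x_2$, $y_1y_2$, $z_1z_2$. The degree-$5$ vertex then has only $10-9=1$ guaranteed free color, so it must be colored first. The vertices $y$ and $z$ start with at least $3$ free colors (one extra loss because $c(x_3)$ may repeat $c(x_1)$ or $c(x_2)$) and are colored greedily afterwards. You instead delete only the two degree-$4$ vertices and keep the third vertex $z$ colored by $c$. Each deleted vertex then faces at most $3+4+1=8$ forbidden colors, and the only interaction between the two is $c(x)\neq c(y)$. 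Your route has three advantages:
\begin{itemize}
\item It has slack: two free colors per vertex instead of one for the tight vertex.
\item It treats every degree pattern of the triangle at once, including the case where all three vertices have degree $4$, which the paper leaves implicit.
\item It checks the degree bound and planarity of $G'$, which the paper does not address.
\end{itemize}
The paper's version has the advantage of following the same template as Claim~\ref{cl:1.2} and the later diamond, $K_4$ and wheel claims, where the whole configuration is deleted.

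\textbf{Planarity justification.} The cyclic order $z,x_1,x_2,y_1,y_2$ along the merged face is not correct in general. If $xyz$ is a separating triangle and $x_1,x_2$ lie on opposite sides of it, then $z$ occurs twice on the merged face, and one of these occurrences separates $x_1$ from $x_2$. Your conclusion still holds, and there is a simpler proof that works for any labeling. Draw the new edge $x_1x_2$ along the old curve $x_1x\cup xx_2$, and draw $y_1y_2$ along $y_1y\cup yy_2$. These curves avoid the rest of the embedding and meet each other only at common endpoints, so $G'$ is plane.

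\textbf{Common neighbors.} A common neighbor $x_i=y_j$ does not in general reduce the number of added edges. This does no harm: its degree drops by $2$ and rises by at most $2$. Its $2$-frugality follows from $c(x)\neq c(y)$, exactly as you argued for $z$.
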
 
\begin{proof}
Let $x,y,z$ be the vertices of a triangle in $G$, and let us assume that $\deg_G(x)=5$ and $\deg_G(y)=\deg_G(z)=4$. (If the degrees of $y$ or $z$ in $G$ are less than $4$, then the proof follows similar lines while there are even more free colors for some vertices.) Consider $G''=G-\{x,y,z\}$, and let $N_G(x) \cap V(G'')=\{x_1,x_2,x_3\}$, $N_G(y) \cap V(G'')=\{y_1,y_2\}$, and $N_G(z) \cap V(G'')=\{z_1,z_2\}$. Further let $X_i=N_G(x_i)\setminus\{x\}$ for all $i\in[3]$, and $Y_i=N_G(y_i)\setminus\{y\}$, $Z_i=N_G(z_i)\setminus\{z\}$ for all $i \in [2]$. Now let $G'$ be obtained from graph $G''$ by adding the edges $x_1x_2$, $y_1y_2$ and $z_1z_2$ if necessary (that is, if these edges are not already in $G$; see Fig. \ref{fig:cl:2}). Since $G'$ has smaller order than $G$ and $\Delta(G)\le 5$, there exists a $2$-frugal coloring $c$ of $G'$ using $10$ colors.

In the next step we count the number of free colors at vertices $x$, $y$ and $z$ in $G$. Note that $|X_i| \le 4$ for each $i\in [3]$, which together with colors $c(x_1)$, $c(x_2)$ and $c(x_3)$ adds up to at most $2 \times 3 + 3=9$ forbidden colors for $x$. Since $y_1y_2 \in E(G')$ and $z_1z_2 \in E(G')$, we have $c(y_1) \ne c(y_2)$ and $c(z_1) \ne c(z_2)$, thus regardless of the color assigned to $x$, no three vertices in $N_G(y)$ or $N_G(z)$ will receive the same color. Hence, $x$ indeed has at least one free color. We now focus on the vertex $y$. Note that $|Y_i| \le 4$ for each $i\in [2]$, which together with colors $c(y_1)$ and $c(y_2)$ adds up to at most $2 \times 2 +2=6$ forbidden colors. However, it is possible that either $c(x_3)=c(x_1)$ or $c(x_3)=c(x_2)$, which potentially yields an additional forbidden color for $y$. Thus, altogether, $y$ has at least three free colors. Similarly, $z$ also has at least three free colors.

Next, we color the vertices of the triangle. First we color the vertex $x$ with a free color. Consequently the number of free colors at vertices $y$ and $z$ can drop by one, resulting in at least two free colors at both vertices $y$ and $z$. In the next step, we color the vertex $y$ with a free color, again potentially dropping the number of free colors at $z$ to one. Thus, we can color $z$ in the final step, which gives a $2$-frugal coloring of $G$, a contradiction, which yields the statement.
\end{proof}

\begin{figure}[ht!]
\begin{center}
	\begin{tikzpicture}[scale=1]
	\draw (-1.5,0)--(1.5,0)--(0,2.2)--(-1.5,0);
    \draw (-1.5,0)--(-2.9,0);
	     \draw (-1.5,0)--(-2.5,-1);
	  \draw (1.5,0)--(2.9,0);
	     \draw (1.5,0)--(2.5,-1);
         \draw[dotted, line width=1.5pt] (2.9,0)--(2.5,-1);
       \draw[dotted, line width=1.5pt] (-2.9,0)--(-2.5,-1);

        \draw (-3.7,0.15)--(-2.9,0);
         \draw (-3.6,0.4)--(-2.9,0);
     \draw (-3.7,-0.15)--(-2.9,0);
         \draw (-3.6,-0.4)--(-2.9,0);

    \draw (-3.4,-1)--(-2.5,-1);
         \draw (-3.4,-1.2)--(-2.5,-1);
     \draw (-3.3,-1.4)--(-2.5,-1);
         \draw (-3.1,-1.55)--(-2.5,-1);

            \draw (3.7,0.15)--(2.9,0);
         \draw (3.6,0.4)--(2.9,0);
     \draw (3.7,-0.15)--(2.9,0);
         \draw (3.6,-0.4)--(2.9,0);

    \draw (3.4,-1)--(2.5,-1);
         \draw (3.4,-1.2)--(2.5,-1);
     \draw (3.3,-1.4)--(2.5,-1);
         \draw (3.1,-1.55)--(2.5,-1);

 \draw (0,2.2)--(-0.9,3.2);
  \draw (0,2.2)--(0,3.4);
   \draw (0,2.2)--(0.9,3.2);
         \draw[dotted, line width=1.5pt] (-0.9,3.2)--(0,3.4);

          \draw (-1.2,4)--(-0.9,3.2);
 \draw (-1.4,3.8)--(-0.9,3.2);
          \draw (-1.6,3.6)--(-0.9,3.2);
                      \draw (-1.8,3.4)--(-0.9,3.2);
                      
  \draw (1.2,4)--(0.9,3.2);
 \draw (1.4,3.8)--(0.9,3.2);
          \draw (1.6,3.6)--(0.9,3.2);
                      \draw (1.8,3.4)--(0.9,3.2);

                        \draw (-0.2,4.3)--(0,3.4);
 \draw (0.2,4.3)--(0,3.4);
          \draw (0.4,4.2)--(0,3.4);
                      \draw (-0.4,4.2)--(0,3.4);

    	\draw [fill=white] (1.5, 0) circle (2.5pt);
    \draw [fill=white] (-1.5, 0) circle (2.5pt);
    \draw [fill=white] (0, 2.2) circle (2.5pt);
    \draw [fill=black] (2.9, 0) circle (2.5pt);
    \draw [fill=black] (2.5, -1) circle (2.5pt);
     \draw [fill=black] (-2.9, 0) circle (2.5pt);
    \draw [fill=black] (-2.5, -1) circle (2.5pt);
    \draw [fill=black] (-0.9, 3.2) circle (2.5pt);
      \draw [fill=black] (0.9, 3.2) circle (2.5pt);
        \draw [fill=black] (0, 3.4) circle (2.5pt);
        
         \draw (-1.1,0.2) node{$z$};
       \draw (1.1,0.2) node{$y$};
        \draw (0,1.9) node{$x$};
          \draw (-1,2.9) node{$x_1$};
\draw (-0.2,3.1) node{$x_2$};
\draw (1,2.9) node{$x_3$};
\draw (-2.9,.3) node{$z_2$};
\draw (-2.1,-1) node{$z_1$};
\draw (2.9,.3) node{$y_1$};
\draw (2.1,-1) node{$y_2$};

\draw[dashed] (0,0.7) circle (1.9);
	\end{tikzpicture} \\
	\caption{The situation in Claim~\ref{cl:2}. Dashed circle describes the removal of the triangle, while dotted edges are added in $G'$ (if they were not already there in $G$). }\label{fig:cl:2}
\end{center}
\end{figure}

\begin{claim} \label{cl:3}
Every vertex of a diamond in $G$ has degree $5$ in $G$.
\end{claim}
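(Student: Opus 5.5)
We argue by contradiction, in the style of Claim~\ref{cl:2}. Let the diamond consist of the triangles $uva$ and $uvb$ sharing the edge $uv$, with tips $a,b$. By Claim~\ref{cl:1} every vertex has degree $4$ or $5$, so we suppose one of $u,v,a,b$ has degree exactly $4$. By Claim~\ref{cl:2} applied to $uva$ and $uvb$, only two configurations can occur, up to symmetry.
\begin{enumerate}[(i)]
\item A central vertex, say $u$, has degree $4$. Then $v,a,b$ all have degree $5$.
\item A tip, say $a$, has degree $4$. Then $u,v$ have degree $5$, while $b$ has degree $4$ or $5$.
\end{enumerate}
In each case we delete a small set $X$ of diamond vertices, possibly add edges to keep some colors distinct, take a $2$-frugal $10$-coloring $c$ of the smaller graph by minimality, and count forbidden colors as in the earlier claims.

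\emph{Case (i).} Let $N_G(u)=\{v,a,b,u_1\}$. Put $G'=G-u$ plus the edge $ab$ if it is absent; this keeps $G'$ planar and $\Delta(G')\le5$, since $a$ had degree $5$ and lost $u$. Then $c(v),c(a),c(b)$ are pairwise distinct, so $N_G(u)$ cannot contain a color three times.
\begin{itemize}
\item The four neighbor colors of $u$ are forbidden.
\item In $N_G(v)\setminus\{u\}=\{a,b,v_1,v_2\}$, a repeated color equal to $c(a)$ or $c(b)$ is already forbidden. So $v$ adds at most one new forbidden color.
\item Likewise $a$ and $b$ each add at most one new forbidden color, since any repetition involving $c(v)$ costs nothing.
\item The vertex $u_1$ adds at most two.
\end{itemize}
This gives at most $4+1+1+1+2=9<10$ forbidden colors, so $u$ can be colored, a contradiction.

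\emph{Case (ii).} Let $N_G(a)=\{u,v,a_1,a_2\}$ and delete $a$.
\begin{itemize}
\item Since $u,v,b$ span a triangle in $G-a$, their colors are distinct.
\item In $N_G(u)\setminus\{a\}=\{v,b,u_1,u_2\}$ at most one repeated color is new, because any repetition of $c(v)$ is already forbidden. The same holds for $v$.
\item Each of $a_1,a_2$ contributes at most two new forbidden colors.
\end{itemize}
The naive bound is therefore $4+1+1+2+2=10$, one too many. This case is the main obstacle.

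First, if $c(a_1)=c(a_2)$, the neighbor colors number at most $3$, and we are done; so we may assume $c(a_1)\ne c(a_2)$. Next, if $b$ has degree $4$, we instead delete both tips $a,b$. The extra slack then comes from $N(u)\setminus\{a,b\}$ and $N(v)\setminus\{a,b\}$ having only three vertices each, so $u$ and $v$ force no new colors at all. We then color $a$ and $b$ greedily, checking that after $a$ is colored, $b$ still has a free color.

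If $b$ has degree $5$, we first try adding an edge $a_1a_2$ in $G'$, provided the rotation at $a$ makes it planar. If that is not available, we use a recoloring step in the tight situation where all $10$ forbidden colors are distinct. In that situation, the new color forbidden by $u$ is a color $\beta$ appearing on two of $b,u_1,u_2$. We recolor one of these occurrences, or $u$ itself, with one of its free colors. Its own count of forbidden colors is again at most $9$, because $a$ is uncolored. This frees $\beta$ for $a$.

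I expect this final tight subcase to require the most careful bookkeeping.
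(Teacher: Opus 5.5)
Your Case (i) is correct and is exactly the paper's second case: delete the degree-$4$ central vertex, add the edge between the tips, and count $4+1+1+1+2=9$.

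The gap is in Case (ii), the degree-$4$ tip $a$, where none of your three devices recovers the missing color.

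\textbf{Deleting only $a$.} With no edge added, nothing in $G-a$ prevents $c(a_1)=c(a_2)=c(u)$. Then $N_G(a)$ contains a color three times whatever $a$ receives, so ``if $c(a_1)=c(a_2)$ we are done'' is false. Adding $a_1a_2$ repairs this, and it is always possible, since $a_1,a_2$ lie on a common face of $G-a$. But it leaves the bound $4+1+1+2+2=10$ unchanged.

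\textbf{The subcase $\deg_G(b)=4$.} Deleting both tips gives no slack. $N_G(u)\setminus\{a,b\}=\{v,u_1,u_2\}$ may still have $c(u_1)=c(u_2)$, a color forbidden for $a$ that is not among its neighbor colors, and likewise for $v$. So $a$ again faces up to $10$ forbidden colors.

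\textbf{The subcase $\deg_G(b)=5$.} The claim that the recolored vertex has at most $9$ forbidden colors is unfounded, because only neighborhoods containing $a$ gain slack. For instance, $u_1$ can have up to $5+1+4\cdot 2=14$ forbidden colors and $b$ up to $5+1+1+3\cdot 2=13$. Recoloring $u$ leaves $\beta$ twice in $N_G(u)\setminus\{a\}$, so it cannot free $\beta$. Moreover, any such recoloring may create a new repeated color in $N_G(u)$, $N_G(v)$, $N_G(a_1)$ or $N_G(a_2)$, that is, a new forbidden color for $a$.

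The missing idea, which is the paper's first case, is to delete the whole diamond. Then add the edges $a_1a_2$, $u_1u_2$, $v_1v_2$ and $b_1b_2$, where $u_1,u_2$ and $v_1,v_2$ are the outside neighbors of $u$ and $v$, and $b_1,b_2,b_3$ those of $b$.

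Now the colored parts of $N_G(u)$ and $N_G(v)$ are rainbow, so the central vertices initially forbid nothing for the tips. This gives:
\begin{itemize}
\item $a$ has at most $2+2\cdot 2=6$ forbidden colors, so $4$ are free;
\item $b$ has at most $3+3\cdot 2=9$;
\item each of $u,v$ has at most $2+2\cdot 2+1=7$, where the extra $1$ is a color possibly repeated on $b_1,b_2,b_3$.
\end{itemize}
Color $b$, then $u$, then $v$; each keeps a free color. Color $a$ last. It loses at most $c(b)$ (which may now be repeated in $N_G(u)$ or $N_G(v)$), $c(u)$ and $c(v)$, so a free color remains. The added edges, together with that extra forbidden color at $u$ and $v$, keep every color at most twice in $N_G(a)$ and $N_G(b)$.

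Making the outer neighborhoods of the central vertices rainbow and coloring the degree-$4$ tip last is what supplies the unit of slack your single-vertex deletion lacks. If $\deg_G(b)=4$ there is only more room.
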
 
\begin{proof}
Let $x,y,z,z'$, where $xy \notin E(G)$, be the vertices of a diamond $D$ in $G$, and suppose that one of those vertices has degree $4$. We split the proof into two subcases.

First, let $\deg_G(y)=4$ and $\deg_G(x)=\deg_G(z)=\deg_G(z')=5$. (If the degrees of $y$, $z$ or $z'$ in $G$ are less than $5$, the proof follows similar lines while there may be even more free colors for some vertices.) Consider $G''=G-D$, and let $N_G(x) \cap V(G'')=\{x_1,x_2,x_3\}$, $N_G(y) \cap V(G'')=\{y_1,y_2\}$, $N_G(z) \cap V(G'')=\{z_1,z_2\}$, and $N_G(z') \cap V(G'')=\{z'_1,z'_2\}$. Further let $X_i=N_G(x_i)\setminus \{x\}$ for all $i\in[3]$, and $Y_i=N_G(y_i)\setminus \{y\}$, $Z_i=N_G(z_i)\setminus \{z\}$ and $Z'_i=N_G(z'_i)\setminus \{z'\}$ for all $i \in [2]$. Now let $G'$ be obtained from graph $G''$ by adding the edges $x_1x_2$, $y_1y_2$, $z_1z_2$ and $z'_1z'_2$ if necessary (see Fig.~\ref{fig:cl:3}$(a)$). Since $G'$ has smaller order than $G$ and $\Delta(G')\le 5$, there exists a $2$-frugal coloring $c$ of $G'$ using $10$ colors.

Let us count the number of free colors at vertices $x$, $y$, $z$ and $z'$ in $G$. Note that $|X_i| \le 4$ for each $i\in [3]$, which together with colors $c(x_1)$, $c(x_2)$ and $c(x_3)$ adds up to at most $3 \times 2 + 3=9$ forbidden colors at $x$. Since $z_1z_2 \in E(G')$ and $z'_1z'_2 \in E(G')$, we have $c(z_1) \ne c(z_2)$ and $c(z'_1) \ne c(z'_2)$, thus regardless of the color assigned to $x$, no three vertices in $N_G(z)$ and $N_G(z')$ will receive the same color. Hence, $x$ indeed has at least one free color. We now focus on the vertex $y$. Note that $|Y_i| \le 4$ for each $i\in [2]$, which together with colors $c(y_1)$ and $c(y_2)$ adds up to at most $2 \times 2 +2=6$ forbidden colors. By the same reasoning as for the vertex $x$, since $c(z_1) \ne c(z_2)$ and $c(z'_1) \ne c(z'_2)$, we get no more forbidden colors for $y$. Hence $y$ has four free colors. Finally, let us count the forbidden colors for $z$ and $z'$. Note that $|Z_i| \le 4$ for each $i\in [2]$, which together with colors $c(z_1)$ and $c(z_2)$ adds up to at most $2 \times 2 +2=6$ forbidden colors. Since it is possible that either $c(x_3)=c(x_1)$ or $c(x_3)=c(x_2)$, this potentially yields an additional forbidden color for $z$. Thus, altogether, $z$ has at least three free colors. Similarly, by symmetry, $z'$ has at least three free colors.

Next, we color the vertices of the diamond. First, color the vertex $x$ with a free color. Now the number of free colors at vertices $z$ and $z'$ can drop by one, resulting in at least two free colors at each $z$ and $z'$. Moreover, the color assigned to $x$ can coincide with a color already used in $N_G(z)$ and/or $N_G(z')$, and therefore becomes forbidden for $y$. Hence, also the number of free colors at $y$ can drop to three. In the next two steps we color the vertices $z$ and $z'$ in any order, which can be done since both have two free colors. This can result in two more forbidden colors for $y$, leaving at least one free color at $y$. Thus, $y$ can also be colored, which gives a $2$-frugal coloring of $G$, a contradiction.

\medskip

In the second case, let $\deg_G(z')=4$ and $\deg_G(x)=\deg_G(y)=\deg_G(z)=5$. (If the degrees of $x$, $y$ or $z$ in $G$ are less than $4$, then the proof again follows similar lines while there can be even more free colors for some vertices.) Consider $G''=G-z'$, and let $N_G(x) \cap V(G'')=\{x_1,x_2,x_3,z\}$, $N_G(y) \cap V(G'')=\{y_1,y_2,y_3,z\}$, $N_G(z) \cap V(G'')=\{z_1,z_2,x,y\}$, and $N_G(z') \cap V(G'')=\{z'_1,x,y,z\}$. Further let $X_i=N_G(x_i)\setminus \{x\}$ and $Y_i=N_G(y_i)\setminus \{y\}$ for all $i\in[3]$, $Z_i=N_G(z_i)\setminus \{z\}$ for all $i \in [2]$, and $Z'_1=N_G(z'_1)\setminus \{z'\}$. Now let $G'$ be obtained from graph $G''$ by adding the edge  $xy$ (see Fig. \ref{fig:cl:3}$(b)$). 
Since $G'$ has smaller order than $G$ and $\Delta(G')\le 5$, there exists a $2$-frugal coloring $c$ of $G'$ using $10$ colors.

Note that the colors at $x$, $y$ and $z$ are pairwise different since they form a triangle in $G'$, hence there are at most two vertices in $N_G(z')$ given the same color by $c$. 
Now, we count the number of free colors at vertex $z'$.
Note that $|Z'_1| \le 4$, which together with color $c(z'_1)$ adds up to at most $2+1=3$ forbidden colors. Also, we need to forbid the colors given to $x$, $y$ and $z$, since they are neighbors of $z'$ in $G$ yielding at most six forbidden colors at $z'$. Additionally, it is possible that two of the vertices $x_1$, $x_2$ and $x_3$ received the same color in $G'$, and also two of the vertices $y_1$, $y_2$ and $y_3$ received the same color in $G'$, which potentially yields two more forbidden colors for $z'$. Finally, if $c(z_1)=c(z_2)$, then this yields another forbidden color for $z'$. Altogether, at most nine colors are forbidden at $z'$. Thus, $c$ can be extended to $G$, resulting in a $2$-frugal coloring of $G$, a contradiction.  
\end{proof}

\begin{figure}[ht!]
\begin{center}
\begin{minipage}{0.45\textwidth}  
    \hskip 1.5cm
    \begin{tikzpicture}[scale=0.8]

    \draw (-1,2.9)--(0,1.5)--(0,3);
    \draw (0,1.5)--(1,2.9);
    \draw (-2,0.8)--(-1,0)--(0,1.5)--(1,0)--(0,-1.5)--(-1,0)--(-2,-.8);
    \draw (-1,0)--(1,0)--(2,0.8);
    \draw (1,0)--(2,-0.8);
    \draw (-0.8,-2.9)--(0,-1.5)--(0.8,-2.9);
       
    \draw[dotted, line width=1.5pt] (-1,2.9)--(0,3);
    \draw[dotted, line width=1.5pt] (2,0.8)--(2,-0.8);
    \draw[dotted, line width=1.5pt] (-0.8,-2.9)--(0.8,-2.9);
    \draw[dotted, line width=1.5pt] (-2,0.8)--(-2,-0.8);    
         
    \draw [fill=white] (1, 0) circle (2.5pt);
    \draw [fill=white] (-1, 0) circle (2.5pt);
    \draw [fill=white] (0, 1.5) circle (2.5pt);
    \draw [fill=white] (0,-1.5) circle (2.5pt);
    \draw [fill=black] (-2, 0.8) circle (2.5pt);
    \draw [fill=black] (-2,-0.8) circle (2.5pt);
    \draw [fill=black] (2, 0.8) circle (2.5pt);
    \draw [fill=black] (2,-0.8) circle (2.5pt);
    \draw [fill=black] (-1, 2.9) circle (2.5pt);
    \draw [fill=black] (0, 3) circle (2.5pt);
    \draw [fill=black] (1, 2.9) circle (2.5pt);
    \draw [fill=black] (-0.8,-2.9) circle (2.5pt);
    \draw [fill=black] (0.8,-2.9) circle (2.5pt);
        
    \draw (0.3,-1.5) node{$y$};
    \draw (-1,-0.3) node{$z$};
    \draw (0.3,1.5) node{$x$};
    \draw (1,-0.3) node{$z'$};
    \draw (-1.25,2.65) node{$x_1$};
    \draw (0.3,2.7) node{$x_2$};
    \draw (1.25,2.65) node{$x_3$};
    \draw (-1,-2.6) node{$y_2$};
    \draw (1,-2.6) node{$y_1$};
    \draw (-1.8,1.1) node{$z_2$};
    \draw (-1.8,-1.1) node{$z_1$};
    \draw (1.8,1.15) node{$z'_1$};
    \draw (1.8,-1.15) node{$z'_2$};
          
    \draw[dashed] (0,0) ellipse (1.5cm and 2cm);

    \draw (0,-4.5) node {$(a)$};
\end{tikzpicture}    
\end{minipage}
%
%
\begin{minipage}{0.45\textwidth}
\begin{tikzpicture}[scale=0.8]
\hskip 1.5cm

    \draw (-1,2.9)--(0,1.5)--(0,3);
    \draw (0,1.5)--(1,2.9);
    \draw (-2,0.8)--(-1,0)--(0,1.5)--(1,0)--(0,-1.5)--(-1,0)--(-2,-.8);
    \draw (-1,0)--(1,0)--(2,0);
    \draw (-1,-2.9)--(0,-1.5)--(1,-2.9);
    \draw (0,-1.5)--(0,-3);
       
    \draw[dotted, line width=1.5pt] (0,1.5)--(0,-1.5);
         
    \draw [fill=white] (1, 0) circle (2.5pt);
    \draw [fill=black] (-1, 0) circle (2.5pt);
    \draw [fill=black] (0, 1.5) circle (2.5pt);
    \draw [fill=black] (0,-1.5) circle (2.5pt);
    \draw [fill=black] (-2, 0.8) circle (2.5pt);
    \draw [fill=black] (-2,-0.8) circle (2.5pt);
    \draw [fill=black] (2, 0) circle (2.5pt);
    \draw [fill=black] (-1, 2.9) circle (2.5pt);
    \draw [fill=black] (0, 3) circle (2.5pt);
    \draw [fill=black] (1, 2.9) circle (2.5pt);
    \draw [fill=black] (-1,-2.9) circle (2.5pt);
    \draw [fill=black] (0,-3) circle (2.5pt);
    \draw [fill=black] (1,-2.9) circle (2.5pt);
        
    \draw (0.3,-1.5) node{$y$};
    \draw (-1,-0.3) node{$z$};
    \draw (0.3,1.5) node{$x$};
    \draw (1,-0.3) node{$z'$};
    \draw (-1.25,2.65) node{$x_1$};
    \draw (0.3,2.7) node{$x_2$};
    \draw (1.25,2.65) node{$x_3$};
    \draw (-1.25,-2.6) node{$y_3$};
    \draw (0.3,-2.65) node{$y_2$};
    \draw (1.25,-2.6) node{$y_1$};
    \draw (-1.8,1.1) node{$z_2$};
    \draw (-1.8,-1.1) node{$z_1$};
    \draw (2,-0.4) node{$z'_1$};
          
    \draw[dashed] (1,0) circle (18pt);

    \draw (0,-4.5) node {$(b)$};
\end{tikzpicture}
\end{minipage}

	\caption{Two cases in the proof of Claim~\ref{cl:3}. Dashed circle describes the removal of the subgraph, while dotted edges are added in $G'$ (if they were not already there in $G$); black vertices may have additional neighbors with respect to the maximum degree. }\label{fig:cl:3}
\end{center}
\end{figure}

\begin{claim} There is no subgraph isomorphic to $K_4$ in $G$.
\label{cl:noK4}
\end{claim}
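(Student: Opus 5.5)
By Claim~\ref{cl:3} the statement follows quickly. A $K_4$ in $G$ contains a diamond, and every vertex of a $K_4$ is a vertex of such a diamond. So every vertex of a copy of $K_4$ in $G$ has degree $5$. The plan is therefore the same reducibility scheme as in Claims~\ref{cl:2} and~\ref{cl:3}: delete the whole $K_4$, add a few edges, color the smaller graph, and extend.

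Let $a,b,c,d$ induce a $K_4$, and suppose, toward a contradiction, that all have degree $5$ (degree smaller than $5$ only makes the counting easier). Each vertex has exactly two neighbors outside: $a_1,a_2$ for $a$, and $b_1,b_2$, $c_1,c_2$, $d_1,d_2$ for $b,c,d$. Let $G''=G-\{a,b,c,d\}$, and obtain $G'$ from $G''$ by adding the edges $a_1a_2$, $b_1b_2$, $c_1c_2$, $d_1d_2$ where they are missing. This is the same edge addition as in Claims~\ref{cl:2} and~\ref{cl:3}, and planarity is preserved in the same way.

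Degrees do not exceed $5$. A vertex outside the $K_4$ adjacent to $k$ of $a,b,c,d$ loses $k$ edges and gains at most $k$ new ones. Since $G'$ is smaller, minimality gives a $2$-frugal $10$-coloring $c$ of $G'$, and in particular $c(a_1)\neq c(a_2)$, and similarly for $b$, $c$, $d$.

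Next I count the free colors at $a$. The forbidden colors are:
\begin{itemize}
\item the colors $c(a_1)$ and $c(a_2)$;
\item for each $i\in[2]$, at most two colors appearing twice in $N_G(a_i)\setminus\{a\}$, a set of size at most $4$.
\end{itemize}
This gives at most $6$ forbidden colors, so $a$ has at least $4$ free colors, and by symmetry so do $b$, $c$ and $d$.

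The key point, which I expect to be the only step needing care, is that no further constraints arise from the neighborhoods of the $K_4$ vertices themselves. Take $N_G(b)=\{a,c,d,b_1,b_2\}$. The colors $c(b_1)$ and $c(b_2)$ are distinct by the added edge. The vertices $a,c,d$ are pairwise adjacent, so a proper coloring gives them distinct colors. Hence no color can occur three times in $N_G(b)$.

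A vertex $w$ outside the $K_4$ may be adjacent to two or more of $a,b,c,d$. Those vertices receive distinct colors, so repetitions in $N_G(w)$ involve at most one vertex of the $K_4$ per color. Such a repetition is already excluded by the ``appears twice in $N_G(w)\setminus\{a\}$'' forbidding rule applied to each of them.

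So the only interaction among $a,b,c,d$ is that they must receive pairwise distinct colors. With lists of size at least $4$ for four vertices, greedy coloring succeeds: each vertex in turn loses at most three colors to its already colored neighbors. This yields a $2$-frugal $10$-coloring of $G$, a contradiction.
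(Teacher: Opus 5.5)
Your proof is correct and follows the paper's argument: delete the $K_4$, join the two outside neighbors of each of its vertices, count at most $6$ forbidden colors (so at least $4$ free colors) per vertex, and color the four vertices greedily with distinct colors. You are more careful than the paper about shared outside neighbors, the degree bound in $G'$, and frugality at the $K_4$ vertices; the one slip is your opening appeal to Claim~\ref{cl:3}, which does not apply because the diamonds there are induced ($xy\notin E(G)$), but nothing depends on it since you treat lower degrees directly.
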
 
\begin{proof}
Let $x,y,z,w$ be the vertices of a subgraph $K_4$ in $G$, and let us assume that $\deg_G(x)=\deg_G(y)=\deg_G(z)=\deg_G(w)=5$. (If some of the degrees of these vertices in $G$ are less than $5$, the proof follows similar lines while there are even more free colors for some vertices.) Consider $G''=G-V(K_4)$, and let $N_G(x) \cap V(G'')=\{x_1,x_2\}$, $N_G(y) \cap V(G'')=\{y_1,y_2\}$, $N_G(z) \cap V(G'')=\{z_1,z_2\}$ and $N_G(w) \cap V(G'')=\{w_1,w_2\}$. Further let $X_i=N_G(x_i)\setminus\{x\}$, $Y_i=N_G(y_i)\setminus\{y\}$, $Z_i=N_G(z_i)\setminus\{z\}$ and $W_i=N_G(w_i)\setminus\{w\}$ for all $i \in [2]$. Now let $G'$ be obtained from graph $G''$ by adding the edges $x_1x_2$, $y_1y_2$, $z_1z_2$ and $w_1w_2$ if necessary (see Fig.~\ref{fig:cl:noK4}). 
Since $G'$ has smaller order than $G$ and $\Delta(G') \le 5$, there exists a $2$-frugal coloring $c$ of $G'$ using $10$ colors.

In the next step we count the number of free colors at vertices $x$, $y$, $z$ and $w$ in $G$. Note that $|X_i| \le 4$ for each $i\in [2]$, which together with colors $c(x_1)$ and $c(x_2)$ adds up to at most $2 \times 2 + 2=6$ forbidden colors for $x$. Thus, $x$ has four free colors. By symmetry, the same holds true for vertices $y$, $z$ and $w$, which all have four free colors.

Finally, we color the vertices $x$, $y$, $z$ and $w$ of the subgraph $K_4$, and this can be done in any order since all have four free colors, and since all have to receive different colors, this does not violate the $2$-frugal coloring condition. This gives a $2$-frugal coloring of $G$, a contradiction.
\end{proof}

\begin{figure}[ht!]
\begin{center}
\begin{tikzpicture}[scale=0.8]
    \draw (-1,-1)--(1,-1)--(1,1)--(-1,1)--(-1,-1)--(1,1)--(1.5,2.5);
    \draw (1,1)--(2.5,1.5);
    \draw (2.5,-1.5)--(1,-1)--(1.5,-2.5);
    \draw (-2.5,-1.5)--(-1,-1)--(-1.5,-2.5);
    \draw (-2.5,1.5)--(-1,1)--(-1.5,2.5);
    \draw plot [smooth,tension=3] coordinates {(1,-1) (3.5,3.5) (-1,1)};
    
    \draw[dotted, line width=1.5pt] (1.5,2.5)--(2.5,1.5);
    \draw[dotted, line width=1.5pt] (-2.5,1.5)--(-1.5,2.5);
    \draw[dotted, line width=1.5pt] (-1.5,-2.5)--(-2.5,-1.5);
    \draw[dotted, line width=1.5pt] (1.5,-2.5)--(2.5,-1.5);
         
    \draw [fill=white] (1, -1) circle (2.5pt);
    \draw [fill=white] (-1, -1) circle (2.5pt);
    \draw [fill=white] (-1, 1) circle (2.5pt);
    \draw [fill=white] (1, 1) circle (2.5pt);
    \draw [fill=black] (1.5, 2.5) circle (2.5pt);
    \draw [fill=black] (2.5,1.5) circle (2.5pt);
    \draw [fill=black] (1.5, -2.5) circle (2.5pt);
    \draw [fill=black] (2.5, -1.5) circle (2.5pt);
    \draw [fill=black] (-1.5, -2.5) circle (2.5pt);
    \draw [fill=black] (-2.5, -1.5) circle (2.5pt);
    \draw [fill=black] (-1.5,2.5) circle (2.5pt);
    \draw [fill=black] (-2.5,1.5) circle (2.5pt);
        
    \draw (1.4,0.8) node{$x$};
    \draw (-1.4,0.8) node{$w$};
    \draw (-1.4,-0.8) node{$z$};
    \draw (0.7,-1.3) node{$y$};
    \draw (1.2,2.8) node{$x_1$};
    \draw (2.8,1.2) node{$x_2$};
    \draw (-1.2,2.8) node{$w_2$};
    \draw (-2.8,1.2) node{$w_1$};
    \draw (-1.2,-2.8) node{$z_1$};
    \draw (-2.8,-1.2) node{$z_2$};
    \draw (2.8,-1.2) node{$y_1$};
    \draw (1.2,-2.8) node{$y_2$};

    \draw[dashed] (0,0) circle (2cm);
\end{tikzpicture}

\caption{The situation in Claim~\ref{cl:noK4}. Dashed circle describes the removal of the $K_4$, while dotted edges are added in $G'$ (if they were not already there in $G$); black vertices may have additional neighbors with respect to the maximum degree. }\label{fig:cl:noK4}
\end{center}
\end{figure}

\begin{claim} Every vertex of degree $4$ in $G$ belongs to at most one triangle in $G$.
\label{cl:4}
\end{claim}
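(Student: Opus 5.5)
We will argue by contradiction, using the same reduction technique as in the previous claims. Let $v$ be a vertex with $\deg_G(v)=4$ that lies in two triangles, and write $N_G(v)=\{x,y,z,w\}$.

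First we pin down the local structure. If two triangles through $v$ shared an edge $vu$, they would form a diamond containing the degree-$4$ vertex $v$. This contradicts Claim~\ref{cl:3}. Hence the two triangles are $vxy$ and $vzw$, meeting only in $v$. By Claim~\ref{cl:2}, each of $x,y,z,w$ has degree $5$. (If some of them had smaller degree, the argument only gets easier, as in earlier claims.) Moreover, $N_G(v)$ induces exactly the matching $\{xy,zw\}$. Indeed, any further edge, such as $yz$, would create a triangle $vyz$ sharing the edge $vy$ with $vxy$, giving a diamond, which is again excluded by Claim~\ref{cl:3}. In the planar embedding, the rotation at $v$ can be taken to be $x,y,z,w$ after relabelling. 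So the edge $yz$ can be drawn inside the face(s) at $v$ between $vy$ and $vz$ once $v$ is deleted.

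Let $G'=G-v+yz$. Then $G'$ is planar. Since $y$ and $z$ each lost the neighbour $v$ and gained one neighbour, $\Delta(G')\le 5$. By minimality, $G'$ has a $2$-frugal $10$-coloring $c$. We keep $c$ on $G-v$ and seek a color for $v$. In $c$ we have $c(x)\ne c(y)$, $c(y)\ne c(z)$ and $c(z)\ne c(w)$. Hence no color occurs three times on $N_G(v)$, so the frugality condition at $v$ itself holds automatically.

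Next we count forbidden colors at $v$. The colors $c(x),c(y),c(z),c(w)$ give at most $4$. For each $u\in N_G(v)$ we must also forbid every color repeated in $N_G(u)\setminus\{v\}$. Take $u=x$, so that $N_G(x)\setminus\{v\}=\{y,x_1,x_2,x_3\}$ (in $G$, not $G'$). Two repeated colors among these four vertices can occur in only two ways:
\begin{itemize}
\item one repeated pair involves $y$; its color is $c(y)$, which is already forbidden;
\item both repeated pairs lie among $x_1,x_2,x_3$; this is impossible, since three vertices support at most one repeated pair.
\end{itemize}
So $x$ contributes at most one new forbidden color. The same argument applies to $y$ (with the partner $x$), $z$ (partner $w$) and $w$ (partner $z$). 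This gives at most $4+4=8<10$ forbidden colors. Assigning any free color to $v$ extends $c$ to a $2$-frugal coloring of $G$, a contradiction.

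The main obstacle is this counting step. The naive bound of $4+4\cdot 2=12$ forbidden colors is too large. The saving comes from noting that a repetition involving the triangle partner only reproduces an already forbidden color. I also need to double-check the planarity of adding $yz$, which is where the cyclic order at $v$ is used.
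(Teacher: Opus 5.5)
Your proof is correct and is essentially the paper's argument: delete $v$, use the earlier claims to see that $N_G(v)$ induces the matching $\{xy,zw\}$ with all four neighbours of degree $5$, and note that each neighbour adds at most one new forbidden colour (a repetition involving its triangle partner only repeats an already forbidden colour), giving at most $4+4=8<10$ forbidden colours. Two small remarks. First, the added edge $yz$ is unnecessary: $xy,zw\in E(G)$ already make the frugality condition at $v$ automatic, and the paper simply takes $G'=G-v$. Second, when two triangles through $v$ share an edge, the four vertices may span a $K_4$ rather than a diamond (Claim~\ref{cl:3} is proved for diamonds with a genuine non-edge), so you should cite Claim~\ref{cl:noK4} alongside Claim~\ref{cl:3}, as the paper does.
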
 
\begin{proof}
Let $x$ be a vertex of degree $4$ in $G$ belonging to more than one triangle in $G$, and let $x_1,x_2,x_3,x_4$ denote its neighbors in $G$. Without loss of generality assume that $x_1x_2\in E(G)$. Combining Claims~\ref{cl:3} and~\ref{cl:noK4}, $x_2x_3\not\in E(G)$ and $x_1x_4\not\in E(G)$. Thus $x_3x_4\in E(G)$. Let $G'=G-x$ (see Fig. \ref{fig:cl:4}). Since $G'$ has smaller order that $G$ and $\Delta(G')\le 5$, $G'$ has a $2$-frugal coloring $c$ of $G'$ using $10$ colors.

Since every vertex $x_i$, $i\in[4]$, has three neighbors in $G-N_G[x]$ by Claim~\ref{cl:2}, there are at most $8$ forbidden colors for $x$ (together with $c(x_1)$, $c(x_2)$, $c(x_3)$ and $c(x_4)$). Thus there is a free color for $x$, implying that $c$ can be extended to a $2$-frugal coloring of $G$, a contradiction.
\end{proof}

\begin{figure}[ht!]
\begin{center}
\begin{tikzpicture}[scale=0.8]
    \draw (1.5,0)--(0,1.5)--(0,-1.5)--(-1.5,0)--(1.5,0);
    
         
    \draw [fill=white] (0, 0) circle (2.5pt);
    \draw [fill=black] (1.5, 0) circle (2.5pt);
    \draw [fill=black] (0,1.5) circle (2.5pt);
    \draw [fill=black] (-1.5, 0) circle (2.5pt);
    \draw [fill=black] (0, -1.5) circle (2.5pt);
        
    \draw (0.3,0.3) node{$x$};
    \draw (0.4,1.5) node{$x_1$};
    \draw (-1.5,-0.4) node{$x_4$};
    \draw (0.4,-1.5) node{$x_3$};
    \draw (1.5,-0.4) node{$x_2$};
              
    \draw[dashed] (0,0) circle (0.75cm);
\end{tikzpicture}

\caption{The situation in Claim~\ref{cl:4}. Dashed circle describes the removal of the vertex of degree $4$; black vertices may have additional neighbors with respect to the maximum degree. }\label{fig:cl:4}
\end{center}
\end{figure}

\begin{claim} There is no subgraph isomorphic to a $4$-wheel in $G$.
\label{cl:no4w}
\end{claim}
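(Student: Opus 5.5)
The plan is to first pin down the degrees of all vertices of the wheel, then remove the whole wheel and recolor greedily, as in Claims~\ref{cl:2}--\ref{cl:noK4}. Let $W$ be a $4$-wheel with center $w$ and rim $a,b,c,d$, a $4$-cycle in this order.

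\textbf{Step 1 (degrees and structure).} If $\deg_G(w)=4$, then $w$ lies in the four triangles $wab,wbc,wcd,wda$, contradicting Claim~\ref{cl:4}. So $\deg_G(w)=5$, and $w$ has exactly one neighbor $w_1$ outside $W$. Each rim vertex lies in two triangles of $W$; for example $a$ lies in $wab$ and $wad$. Hence, by Claim~\ref{cl:4}, every rim vertex has degree $5$. The two chords $ac$ and $bd$ are absent, since otherwise $G$ contains a $K_4$, contradicting Claim~\ref{cl:noK4}. Thus each rim vertex $v\in\{a,b,c,d\}$ has exactly two neighbors $v_1,v_2$ outside $W$. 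Note that $w,a,b,c$ span a diamond, consistent with Claim~\ref{cl:3}.

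\textbf{Step 2 (reduction).} Let $G''=G-V(W)$. Let $G'$ be obtained from $G''$ by adding the edges $a_1a_2$, $b_1b_2$, $c_1c_2$ and $d_1d_2$ where they are missing. Degrees do not increase, and planarity is kept, since each pair lies on a face after deleting $v$. Hence $G'$ has a $2$-frugal coloring $c$ with $10$ colors.

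\textbf{Step 3 (counting free colors).}
\begin{itemize}
\item For a rim vertex $v$: the colors $c(v_1)\ne c(v_2)$ are forbidden, and each $v_i$ forbids at most two colors repeated in $N_G(v_i)\setminus\{v\}$. This gives at most $6$ forbidden colors, so at least $4$ free colors.
\item For $w$: the color $c(w_1)$ and at most two repeated colors around $w_1$ are forbidden, leaving at least $7$ free colors.
\item Outside vertices adjacent to two rim vertices cause no new trouble. Such a vertex sees at most two vertices of $W$, and two new vertices in its neighborhood can create a triple only with one old color. That color is already among those counted above.
\end{itemize}

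\textbf{Step 4 (extension).} Color the rim first. To protect $N_G(w)$, remove $c(w_1)$ from each rim list; at least $3$ free colors then remain. A proper coloring of the $4$-cycle uses each color at most twice on the rim (only on $a,c$ or $b,d$), so no color appears three times in $N_G(w)$.

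The remaining constraints are frugality at the rim vertices themselves. For instance, $N_G(b)=\{w,a,c,b_1,b_2\}$, so if $c(a)=c(c)$, this common color must differ from $c(b_1)$ and $c(b_2)$.

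To handle this, I would color $a$ and $c$ first. Where possible I give them different colors, which is possible since the lists have size at least $3$. If they share a color, I choose one outside $\{c(b_1),c(b_2),c(d_1),c(d_2)\}$. Then $b$ and $d$ are colored, each avoiding the colors of $a$ and $c$ and the analogous constraints at $a$ and $c$.

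Finally $w$ is colored. Its forbidden colors are at most $3$ from $w_1$, at most $4$ rim colors, and the colors appearing twice in some $N_G(v)\setminus\{w\}$ for $v$ on the rim. If the rim is colored with $a,c$ and $b,d$ distinct, no rim color is repeated in such a neighborhood, and the repeats come only from $v_1$ or $v_2$ matching a rim color.

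\textbf{Main obstacle.} I expect the main difficulty to be this final count at $w$ together with the interplay on the rim cycle. The slack is small: $7$ free colors at $w$ minus up to $4$ rim colors minus up to $4$ possible repeats is too many in the worst case. So the rim must be colored so that its colors avoid $\{c(v_1),c(v_2)\}$ at the neighboring rim vertices, or at least so that few repeats are created.

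If the straightforward greedy order fails, I would instead color $w$ first, using its $7$ options. I would then treat the rim as a list-coloring of $C_4$ with lists of size at least $2$. Even cycles are $2$-choosable, and the frugality constraints at $w$ and at the rim vertices would be absorbed by a short case analysis on whether $c(a)=c(c)$.
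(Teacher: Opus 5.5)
Your setup is the paper's: all wheel vertices have degree $5$, you delete $V(W)$, add the four rim edges, and get at least $4$ free colors at each rim vertex and $7$ at $w$. Using Claim~\ref{cl:4} instead of Claim~\ref{cl:3} for the degrees is fine. But you never complete the extension. You conclude that the count at $w$ may fail ($7$ minus $4$ rim colors minus up to $4$ repeats). The fallback you offer (color $w$ first, $2$-choosability of $C_4$, ``a short case analysis'') does not verify the frugality conditions, so the decisive step is a gap.

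The obstacle is illusory, because your count double counts. Once the rim is properly colored and $c(v_1)\neq c(v_2)$, write $N_G(v)\setminus\{w\}=\{r,r',v_1,v_2\}$, with $r,r'$ the rim neighbours of $v$. A color occurring twice there must be $c(r)$ or $c(r')$, as you yourself note. That color is already forbidden at $w$, since $w$ is adjacent to the whole rim. The same holds for any repeat newly created in $N_G(w_1)$. Hence $w$ keeps at least $7-4=3$ free colors.

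The second misstep is deleting $c(w_1)$ from the rim lists. This is needed only if you allow $c(a)=c(c)$ or $c(b)=c(d)$, and such repeats create problems your Step~3 does not handle. Suppose $a$ and $c$ share an outside neighbour $p$ and both receive a color that occurs once in $N_G(p)\setminus\{a,c\}$. Then $p$ sees that color three times, and your count never excluded it. Your assertion that an outside vertex sees at most two vertices of $W$ is also unjustified, since nothing proved so far rules out an outside vertex adjacent to three or even all four rim vertices.

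The paper's proof avoids all of this. Keep the full lists of size at least $4$ and color $a,b,c,d$ greedily with pairwise distinct colors, which gives $4,3,2,1$ options. Each new restriction on a later rim vertex is just the color of an already colored rim vertex. With four distinct rim colors, no color appears three times in $N_G(w)$, even if $c(w_1)$ equals a rim color. The same holds in each $N_G(v)$ and at every outside vertex. Then $w$ is colored from its at least $3$ remaining free colors.
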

\begin{proof}
Let $x,y,z,u,v$ be the vertices of a $4$-wheel $W$ in $G$ with the central vertex $x$. Since each of these vertices belongs to a diamond, we infer $\deg_G(x)=\deg_G(y)=\deg_G(z)=\deg_G(u)=\deg_G(v)=5$.  Consider $G''=G-V(W)$, and let $N_G(x) \cap V(G'')=\{x_1\}$, $N_G(y) \cap V(G'')=\{y_1,y_2\}$, $N_G(z) \cap V(G'')=\{z_1,z_2\}$, $N_G(u) \cap V(G'')=\{u_1,u_2\}$ and $N_G(v) \cap V(G'')=\{v_1,v_2\}$. Further let $X_1=N_G(x_1)\setminus\{x\}$, and $Y_i=N_G(y_i)\setminus\{y\}$, $Z_i=N_G(z_i)\setminus\{z\}$, $U_i=N_G(u_i)\setminus\{u\}$ and $V_i=N_G(v_i)\setminus\{v\}$ for all $i \in [2]$. Now let $G'$ be obtained from $G''$ by adding the edges $y_1y_2$, $z_1z_2$, $u_1u_2$ and $v_1v_2$ if necessary (see Fig. \ref{fig:cl:no4w}). Since $G'$ has smaller order than $G$ and $\Delta(G') \le 5$, there exists a $2$-frugal coloring $c$ of $G'$ using $10$ colors.

Let us count the number of free colors at vertices of the $4$-wheel $W$ in $G$. Note that $|X_1| \le 4$, which together with the color $c(x_1)$ adds up to at most $2 \times 1 + 1=3$ forbidden colors for $x$. Thus, $x$ has seven free colors. For the remaining vertices of $W$ we proceed in the same manner. Note that $|Y_i| \le 4$ for each $i\in [2]$, which together with colors $c(y_1)$ and $c(y_2)$ adds up to at most $2 \times 2 + 2=6$ forbidden colors for $y$. Thus, $y$ has four free colors, and the same holds true for vertices $z$, $u$ and $v$, which all have four free colors.

Finally, we color the vertices of the $4$-wheel $W$ in the following way. Firstly, we can color the vertices $y$, $z$, $u$ and $v$ with pairwise different colors in any order since they all have four free colors. This will not violate the frugal coloring condition in $N_G(x)$ even if one of those colors is the same as the color $c(x_1)$. After coloring these four vertices,  the number of free colors for $x$ can drop to three, and we can use one of those colors for $x$ to obtain a $2$-frugal coloring of $G$, which is a contradiction.
\end{proof}

\begin{figure}[ht!]
\begin{center}
\begin{tikzpicture}[scale=0.7]
    \draw (-2,-2)--(2,-2)--(2,2)--(-2,2)--(-2,-2)--(2,2)--(2.5,3.5);
    \draw (2,2)--(3.5,2.5);
    \draw (3.5,-2.5)--(2,-2)--(2.5,-3.5);
    \draw (-3.5,-2.5)--(-2,-2)--(-2.5,-3.5);
    \draw (-3.5,2.5)--(-2,2)--(-2.5,3.5);
    \draw (-2,2)--(2,-2);
    \draw (0,0)--(0,1.3);
    \draw [dashed] plot [smooth,tension=0.5] coordinates {(0,0.5) (-0.5,1) (-1.5,2) (-2,2.5) (-2.7,2) (-2.7,-2)}; 
    \draw [dashed] plot [smooth,tension=0.5] coordinates{ (2.7,-2) (2.7,2) (2,2.5) (1.5,2) (0.5,1) (0,0.5)};
    \draw [dashed] plot [smooth,tension=0.5] coordinates { (-2.7,-2) (-2,-2.7) (2,-2.7) (2.7,-2)};
    
    \draw[dotted, line width=1.5pt] (2.5,3.5)--(3.5,2.5);
    \draw[dotted, line width=1.5pt] (-3.5,2.5)--(-2.5,3.5);
    \draw[dotted, line width=1.5pt] (-2.5,-3.5)--(-3.5,-2.5);
    \draw[dotted, line width=1.5pt] (2.5,-3.5)--(3.5,-2.5);
         
    \draw [fill=white] (0, 0) circle (2.5pt);
    \draw [fill=white] (2, -2) circle (2.5pt);
    \draw [fill=white] (-2, -2) circle (2.5pt);
    \draw [fill=white] (-2, 2) circle (2.5pt);
    \draw [fill=white] (2, 2) circle (2.5pt);
    \draw [fill=black] (2.5, 3.5) circle (2.5pt);
    \draw [fill=black] (3.5,2.5) circle (2.5pt);
    \draw [fill=black] (2.5, -3.5) circle (2.5pt);
    \draw [fill=black] (3.5, -2.5) circle (2.5pt);
    \draw [fill=black] (-2.5, -3.5) circle (2.5pt);
    \draw [fill=black] (-3.5, -2.5) circle (2.5pt);
    \draw [fill=black] (-2.5,3.5) circle (2.5pt);
    \draw [fill=black] (-3.5,2.5) circle (2.5pt);
    \draw [fill=black] (0,1.3) circle (2.5pt);
        
    \draw (0,-0.4) node{$x$};
    \draw (2.3,1.8) node{$y$};
    \draw (-2.3,1.8) node{$v$};
    \draw (-2.3,-1.8) node{$u$};
    \draw (2.3,-1.8) node{$z$};
    \draw (2.2,3.8) node{$y_1$};
    \draw (3.8,2.2) node{$y_2$};
    \draw (-2.2,3.8) node{$v_2$};
    \draw (-3.8,2.2) node{$v_1$};
    \draw (-2.2,-3.8) node{$u_1$};
    \draw (-3.8,-2.2) node{$u_2$};
    \draw (3.8,-2.2) node{$z_1$};
    \draw (2.2,-3.8) node{$z_2$};
    \draw (0.4,1.3) node{$x_1$};

\end{tikzpicture}

\caption{The situation in Claim~\ref{cl:no4w}. Dashed circle describes the removal of the $4$-wheel, while dotted edges are added in $G'$ (if they were not already there in $G$); black vertices may have additional neighbors with respect to the maximum degree. }\label{fig:cl:no4w}
\end{center}
\end{figure}

\begin{claim} There is no vertex of degree $5$ in $G$ which belongs to exactly four triangles in $G$.
\label{cl:5}
\end{claim}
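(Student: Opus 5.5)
The plan is to first pin down the local structure around such a vertex using Claims~\ref{cl:3}, \ref{cl:noK4} and~\ref{cl:no4w}, and then run the same reduction-and-recoloring scheme as in the previous claims. Suppose $x$ has degree $5$, with neighbors $x_1,\dots,x_5$, and lies in exactly four triangles. Let $H$ be the graph induced on $\{x_1,\dots,x_5\}$; the four triangles through $x$ correspond to four edges of $H$.

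\textbf{Step 1: the shape of $H$.} A triangle in $H$ would give a $K_4$ containing $x$, which Claim~\ref{cl:noK4} excludes. A $4$-cycle in $H$ would give a $4$-wheel centred at $x$, which Claim~\ref{cl:no4w} excludes. A $5$-cycle needs five edges, so it cannot occur. Hence $H$ is a forest with $4$ edges on $5$ vertices, that is, a spanning tree of $\{x_1,\dots,x_5\}$.

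\textbf{Step 2: degrees.} Every leaf $x_i$ of $H$ has its $H$-neighbour $x_j$ of $H$-degree at least $2$. So $x_i$ lies in a diamond formed by two triangles $xx_ix_j$ and $xx_jx_k$. By Claim~\ref{cl:3}, $x$ and all of $x_1,\dots,x_5$ have degree $5$. Using the planar embedding, I expect the tree to be forced to be the path $x_1x_2x_3x_4x_5$, with the rotation at $x$ following this order and the face $x_5xx_1$ not a triangle. Other trees (the star, or a vertex of $H$-degree $3$) have a vertex $x_j$ adjacent to $x$ and to three of its neighbours. These should be handled by the same argument with more room, since such an $x_j$ has at most one neighbour outside $N[x]$.

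\textbf{Step 3: the reduction.} Remove a set $X$ from $G$, add chords in the resulting merged face to keep the neighbourhoods of removed degree-$5$ vertices rainbow, and extend a $2$-frugal $10$-colouring of $G'$. The naive choice $X=\{x\}$ gives $5$ neighbour colours plus up to one repeated colour from the outer neighbourhood of each $x_i$ (each $x_i$ has at most $4-\deg_H(x_i)$ outer neighbours, and repeats involving $H$-neighbours are already counted). This totals $10$ forbidden colours, one too many.

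So I would remove $X=\{x,x_3\}$ (the middle vertex of the path, or the high-$H$-degree vertex in the other trees). I would then add the edges $x_2x_4$ and a chord between the two outer neighbours of $x_3$, if these are absent; both fit in the merged face and keep $\Delta(G')\le5$. In $G'$ the vertices $x_2,x_4$ have degree $4$ and distinct colours, and $x_3$ has only its two outer neighbours, which now also carry distinct colours.

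Next I count free colours for $x$ and $x_3$. Vertex $x$ sees $c(x_1),c(x_2),c(x_4),c(x_5)$ and at most one repeat per $x_i$ with $i\ne 3$, so at most $8$ forbidden colours and at least $2$ free. Vertex $x_3$ sees $c(x_2),c(x_4)$ and its two outer neighbours, plus repeats in their neighbourhoods, giving at most $4+2+2+2=10$. Here the added edges must save at least one, for instance because $x_2$ and $x_4$ each have only one remaining neighbour outside $\{x,x_1,x_3,x_5\}$.

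\textbf{Step 4: completing the colouring.} Colour $x_3$ first, then $x$. The colour of $x_3$ costs $x$ at most one further option, and the frugality condition at $x_2$ and $x_4$ is checked using that they now see $x,x_3$ together with previously distinct colours.

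The main obstacle is Step 3. One must choose $X$ and the added chords so that both the planarity and degree constraints of $G'$ survive and the forbidden-colour count drops below $10$ for every removed vertex. If the choice $\{x,x_3\}$ fails, the fallback is to remove all of $N[x]\cap\{x,x_2,x_3,x_4\}$, which forms two diamonds, and colour in the order $x_2,x_4,x_3,x$, mimicking the proof of Claim~\ref{cl:3}.
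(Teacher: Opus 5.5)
Your Step 3 does not close, and it carries the whole argument. Write $b_1,b_2$ and $d_1,d_2$ for the outer neighbours of $x_2$ and $x_4$, and $t_1,t_2$ for those of $x_3$. With $X=\{x,x_3\}$, the forbidden colours at $x_3$ are:
\begin{itemize}
\item the four colours $c(x_2),c(x_4),c(t_1),c(t_2)$;
\item one repeated colour in $N_G(x_2)\setminus\{x,x_3\}=\{x_1,b_1,b_2\}$, and one in $\{x_5,d_1,d_2\}$;
\item two repeated colours in each of $N_G(t_1)\setminus\{x_3\}$ and $N_G(t_2)\setminus\{x_3\}$.
\end{itemize}
That is ten colours, and nothing in your construction prevents them from being pairwise distinct.

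Your proposed saving rests on a miscount. Since $\deg_G(x_2)=5$ and only $x,x_1,x_3$ lie in $N_G[x]$, the vertex $x_2$ has two outer neighbours, not one. The chord $x_2x_4$ does nothing to $\{x_1,b_1,b_2\}$. The chord $t_1t_2$ only secures frugality at $x_3$ itself. In general you cannot add chords between $b_1,b_2$ or inside $N_G(t_i)$ without exceeding degree $5$.

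Worse, $c(x_3)$ must also avoid a colour shared by $x_1$ and $x_5$, otherwise $N_G(x)$ contains three vertices of one colour. You did not count this, nor add $x_1x_5$. Colouring $x$ first only adds $c(x)$ to the forbidden list of $x_3$. So $x_3$ may have no free colour.

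The fallback is only named. Even with the natural chords $x_1x_5,b_1b_2,t_1t_2,d_1d_2$, the counts guarantee just two free colours at each of $x_2,x_3,x_4$. In your order $x_2,x_4,x_3,x$, the vertex $x_3$ can lose both to $c(x_2)$ and $c(x_4)$. A joint choice argument is needed and is not supplied.

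The missing idea, used in the paper, is to delete the entire closed neighbourhood $N_G[x]$, with the path written $y\,z\,u\,v\,w$. One then adds a chord between two outer neighbours of each of $y,z,u,v,w$. Now $x$ has no coloured neighbour and at most two forbidden colours, coming from the third outer neighbour of $y$ or of $w$, so it can be coloured last. The ends $y,w$ have at least one free colour each and are coloured first, with distinct colours if they share a neighbour. The vertices $z,u,v$ have at least four free colours each. What remains is a short case analysis on whether $c(y)=c(w)$. In the second case one reuses $c(w)$ on $z$ or $c(y)$ on $v$ when possible, and otherwise colours $u$ first, which keeps frugality at $x$ under control.

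Separately, your expectation in Step 2 that planarity forces $H$ to be a path is false, since the cone over any tree is planar. The other two trees are easy, though. When $H$ is a star, deleting $x$ alone leaves at most nine forbidden colours. When $H$ has a vertex of $H$-degree $3$ with one extended leg, the same holds after adding one chord between two $H$-leaves to prevent three equal colours on $N_G(x)$.
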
 
\begin{proof}
Suppose to the contrary that $x$ is such a vertex, and let $y,z,u,v, w$ be its neighbors. Without loss of generality assume that   $\{yz,zu,uv,vw\}\subset E(G)$ whereas $yw\notin E(G)$, which results in exactly four triangles to which $x$ belongs. Due to Claims~\ref{cl:noK4} and~\ref{cl:no4w}, there are no additional edges between any two vertices in $\{y,z,u,v,w\}$, and due to Claim~\ref{cl:3} the degree of each of these vertices is $5$. Let $G''=G-N_G[x]$,  and let $N_G(y)\cap V(G'')=\{y_1,y_2,y_3\}$,  $N_G(z)\cap V(G'')=\{z_1,z_2\}$, $N_G(u)\cap V(G'')=\{u_1,u_2\}$, $N_G(v)\cap V(G'')=\{v_1,v_2\}$, $N_G(w)\cap V(G'')=\{w_1,w_2,w_3\}$. Furthermore, $Y_i =N_G(y_i)\setminus\{y\}$, $W_i =N_G(w_i)\setminus\{w\}$, where $i\in[3]$, and $Z_i =N_G(z_i)\setminus\{z\}$, $U_i =N_G(u_i)\setminus\{u\}$, $V_i =N_G(v_i)\setminus\{v\}$, where $i\in [2]$. 
Let $G'$ be the graph obtained from $G''$ by adding edges $y_1y_2,z_1z_2,u_1u_2,v_1v_2, w_1w_2$ whenever they are not already in $G''$ (see Fig. \ref{fig:cl:5}). Since $G'$ has smaller order than $G$ and $\Delta(G')\le 5$, $G'$ has a $2$-frugal coloring $c$ using $10$ colors.

Let us count the numbers of free colors at vertices in $V(G)\setminus V(G')$. Note that $y$ and $w$ each have at most 
$2\times 3+ 3=9$ forbidden colors, whereas $z,u,v$ each have at most  $2\times 2+2=6$ forbidden colors. For the vertex $x$, since $\{y_1y_2,z_1z_2,u_1u_2,v_1v_2,w_1w_2\}\subset E(G')$, the only forbidden colors appear at $x$ when $c(y_i)=c(y_3)$ or $c(w_i)=c(w_3)$ for some $i\in [2]$; thus altogether there are at most two forbidden colors at $x$. 

Now we extend the coloring $c$ of $G'$ to $G$. We start by coloring the vertices $y$ and $w$, which each have at least one free color. Note that if $y$ and $w$ have a common neighbor (that is, $y_i=w_j$ for some $i$ and $j$), then actually $y$ and $w$ each have at least two free colors. In such a case, we color $y$ and $w$ with two distinct colors. Otherwise, if $y$ and $w$ do not have a common neighbor, then color each of them with its own free color. (Note that in the latter case, it is possible that $c(y)=c(w)$.)
In the rest of the proof, we consider each of these two possibilities.

First, assume that $c(y)=c(w)$. This implies possibly one more forbidden color for each of $z,u,v$ (for $z$ and $v$ since $y$ or $w$ is its neighbor, for $u$ since $x$ has two neighbors $y$ and $w$ having the same color). Thus, $z$ has at least $3$ free colors, and after coloring it, this may decrease numbers of free colors for $u$ and $v$ by one. Hence $u$ as well as $v$ has at least two free colors. Now color $u$ with a free color, which may again decrease the number of free colors at $v$ by one. Thus, $v$ has at least one free color, and we next color $v$. Finally, vertex $x$ initially had at most two forbidden colors, and in its neighborhood there are at most $4$ colors, which are additionally forbidden for $x$. Hence there are at least $4$ free colors at $x$ and we color it. Therefore $G$ has a $2$-frugal coloring using at most $10$ colors, a contradiction.

Second, assume that $c(y)\not=c(w)$. If $c(w)$ is a free color for $z$, we assign $z$ the color $c(w)$, which decreases the number of free colors at $u$ by one. Hence $u$ still has at least two free colors (indeed, from initial four free colors at $u$, the colors $c(y)$ and $c(w)$ are also forbidden for $u$). Since $v$ is a neighbor of $w$, $c(w)$ is already forbidden for $v$ so the number of free colors at $v$ after coloring of $z$ remains the same (at least two). Then we color $u$ with a free color, which decreases the number of free colors for $v$ by $1$. However, at least one free color for $v$ remains implying that we obtain a $2$-frugal coloring of $G$, a contradiction. If $c(y)$ is a free color for $v$, we are in the previous situation interchanging the roles of $z$ and $v$. Thus we may assume that $c(w)$ is not free for $z$ and $c(y)$ is not free for $v$. Vertex $u$ has at least $2$ free colors ($4$ initially minus possibly $c(w)$ and $c(y)$). We color $u$ with a free color, which decreases the numbers of free colors of $z$ and $v$ by one. For $z$ (and, symmetrically, for $v$), there are at most $8$ forbidden colors (6 initially plus $c(w)$ and $c(u)$). Color $z$ with a free color, which decreases the number of free colors at $v$ by one. Finally, we have a free color at $v$, implying that we can extend the coloring $c$ of $G'$ to a $2$-frugal coloring of $G$, a contradiction.
\end{proof}

\begin{figure}[ht!]
\begin{center}
	\begin{tikzpicture}[scale=.8]
	\draw (0,1.6)--(1.6,.3);
	\draw (1.6,0.3)--(1,-1.6);
    \draw (-1.6,0.3)--(-1,-1.6)--(1,-1.6);
        
	  \draw (-1.6,0.3)--(0,-0.1)--(-1,-1.6);
    \draw (1.6,0.3)--(0,-0.1)--(1,-1.6);
    \draw (0,1.6)--(0,-0.1);
    \draw (-1,2.8)--(0,1.6)--(0,2.9);
    \draw (0,1.6)--(1,2.8);
    \draw (-2.7,1.2)--(-1.6,0.3)--(-2.95,0.5);
    \draw (-1.6,0.2)--(-2.95,-0.2);
    \draw (2.4,-2.6)--(1,-1.6)--(1.4,-3.15);
    \draw (-2.4,-2.6)--(-1,-1.6)--(-1.4,-3.15);
    \draw (2.7,1.2)--(1.6,0.3)--(2.95,-0.2);
    \draw[dotted, line width=1.5pt] (-1,2.8)--(0,2.9);
    \draw[dotted, line width=1.5pt] (2.7,1.2)--(2.95,-0.2);
    \draw[dotted, line width=1.5pt] (2.4,-2.6)--(1.4,-3.15);
    \draw[dotted, line width=1.5pt] (-2.4,-2.6)--(-1.4,-3.15);
    \draw[dotted, line width=1.5pt] (-2.95,-.2)--(-2.95,.5);
         
    \draw [fill=white] (1.6, 0.3) circle (2.5pt);
    \draw [fill=white] (-1.6, 0.3) circle (2.5pt);
    \draw [fill=white] (0, 1.6) circle (2.5pt);
    \draw [fill=white] (1,-1.6) circle (2.5pt);
    \draw [fill=white] (-1,-1.6) circle (2.5pt);
    \draw [fill=white] (0,-0.1) circle (2.5pt);
    \draw [fill=black] (1,2.8) circle (2.5pt);
    \draw [fill=black] (-1,2.8) circle (2.5pt);
    \draw [fill=black] (0,2.9) circle (2.5pt);
    \draw [fill=black] (-2.7,1.2) circle (2.5pt);
    \draw [fill=black] (-2.95,0.5) circle (2.5pt);
    \draw [fill=black] (-2.95,-0.2) circle (2.5pt);
    \draw [fill=black] (2.7,1.2) circle (2.5pt);
    \draw [fill=black] (2.95,-0.2) circle (2.5pt);
    \draw [fill=black] (-2.4,-2.6) circle (2.5pt);
    \draw [fill=black] (-1.4,-3.15) circle (2.5pt);
    \draw [fill=black] (2.4,-2.6) circle (2.5pt);
    \draw [fill=black] (1.4,-3.15) circle (2.5pt);
         
    \draw (-1.6,0.6) node{$w$};
    \draw (0.4,1.7) node{$y$};
    \draw (1.6,0.6) node{$z$};
    \draw (-0.8,-1.9) node{$v$};
    \draw (0.8,-1.9) node{$u$};
    \draw (0,-0.5) node{$x$};
    \draw (1,3.2) node{$y_3$};
    \draw (0,3.3) node{$y_2$}; 
    \draw (-1,3.2) node{$y_1$};
    \draw (-3.4,-0.2) node{$w_1$};
    \draw (-3.4,0.5) node{$w_2$}; 
    \draw (-3.15,1.2) node{$w_3$};
    \draw (2.8,-2.6) node{$u_1$};
    \draw (1,-3.15) node{$u_2$}; 
    \draw (-1,-3.15) node{$v_1$};
    \draw (-2.8,-2.6) node{$v_2$};
    \draw (3.35,-0.2) node{$z_2$}; 
    \draw (3.1,1.2) node{$z_1$};
    \draw[dashed] (0,-0.1) circle (2.2cm);
        
\end{tikzpicture} \\
    \caption{Situation from the proof of Claim~\ref{cl:5}. Dashed circle describes the removal of the subgraph, while dotted edges are added in $G'$ (if they were not already there in $G$); black vertices may have additional neighbors with respect to the maximum degree. }\label{fig:cl:5}
\end{center}
\end{figure}

\begin{claim} If a vertex of degree $5$ is the center of a $5$-wheel, then all its neighbors lie in  exactly two triangles (of the same wheel).
\label{cl:8}
\end{claim}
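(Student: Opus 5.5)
Let $x$ be a vertex of degree $5$ that is the center of a $5$-wheel $W$, with rim $y,z,u,v,w$ in cyclic order, so $\{yz,zu,uv,vw,wy\}\subset E(G)$. The statement amounts to showing that the rim carries no chords and that no rim vertex lies in a triangle outside $W$. I will rule these out one at a time, each time reducing to an earlier claim where possible.

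The chords are handled directly by earlier claims. A chord such as $yu$ would create the $K_4$ on $\{x,y,z,u\}$, which Claim~\ref{cl:noK4} forbids. It would also create the $4$-wheel centred at $x$ with rim $y,u,v,w$, which Claim~\ref{cl:no4w} forbids. So each rim vertex, say $y$, meets the wheel in exactly the two triangles $xyz$ and $xyw$. Each rim vertex also lies in the diamond formed by these two triangles, so Claim~\ref{cl:3} gives $\deg_G(y)=5$ for every rim vertex.

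It remains to show that $y$ lies in no further triangle. Let $y'$ denote the unique neighbour of $y$ outside $N_G[x]$. Since $\deg_G(y)=5$ and there are no chords, $N_G(y)=\{x,z,w,y'\}$ plus one more vertex. A triangle through $y$ not in $W$ must therefore use $y'$, so it is either $yzy'$ or $ywy'$ (with $y'$ a common neighbour), or it uses $y$'s remaining outside neighbour together with $y'$. In the case $yzy'$, the vertices $\{x,y,z,y'\}$ would form a diamond with $x$ and $y'$ as its tips. This is allowed by Claim~\ref{cl:3} only if $\deg_G(y')=5$, so I cannot finish this case from earlier claims alone. Instead I would argue by reducibility in the paper's style. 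Remove $N_G[x]$, and add edges between the two outer neighbours of each rim vertex, as in Claim~\ref{cl:5}. Then count forbidden colours. Each rim vertex has only two outer neighbours, so it has at most $6$ forbidden colours, and since the added edges separate the outer neighbours' colours, $x$ has essentially no forbidden colours.

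Two points in this count need care. First, the extra triangle makes two rim vertices share an outside neighbour. This shrinks the set of outer vertices, which slightly changes the counts. I would re-count these shared vertices carefully, because the shared neighbour's second-neighbourhood constraints now appear for two rim vertices. Second, if the extra triangle uses $y'$ and another outside vertex $y''$ (so the triangle is $yy'y''$), then $y$ has two outer neighbours that are adjacent. In that case the added edge $y'y''$ already exists, and the same count applies.

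With the counts in hand, I would colour the rim greedily: each rim vertex starts with at least $4$ free colours and has two rim neighbours. Going around the $5$-cycle is the delicate part, since the last rim vertex may lose colours both from its two rim neighbours and from frugality constraints at $x$ (repeated colours among the rim). I would first colour one pair of opposite vertices with equal colours if possible, as in the second case of Claim~\ref{cl:5}, so that the last vertex retains a free colour. Finally I would colour $x$, which has at most $5$ neighbour colours plus a few repetition constraints among $10$. The main obstacle is making the odd-cycle greedy colouring of the rim work with only $4$ initial free colours while also keeping the frugality condition at $x$ satisfied. I would try the trick from Claim~\ref{cl:5} first: reuse a colour on two nonadjacent rim vertices.
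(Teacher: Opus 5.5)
\paragraph{Verdict.} The proposal has a genuine gap: the reducibility argument, which is the whole content of the claim, is never carried out.

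\paragraph{What is fine.} Your structural preliminaries match the paper. Chords are excluded by Claim~\ref{cl:noK4}, and every vertex of $W$ has degree $5$ by Claim~\ref{cl:3}. An extra triangle at $y$ is either $yzy'$ (or $ywy'$), or has both other vertices outside $N_G[x]$. Note that $y$ has two neighbours outside $N_G[x]$, not a unique one.

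\paragraph{The gap.} After deleting $N_G[x]$ you must colour the rim $5$-cycle from lists of size $4$, and you stop exactly there. Greedy colouring can get stuck. Write $r^-,r^+$ for the rim neighbours of the last rim vertex $r$, and $r^{--},r^{++}$ for the rim vertices at distance two from $r$. Then $r$ must avoid $c(r^-)$ and $c(r^+)$. It must also avoid $c(r^{--})$ whenever this colour occurs on an outer neighbour of $r^-$, since otherwise it appears three times in $N_G(r^-)$; likewise for $c(r^{++})$. That is four colours, which can exhaust a list of size $4$.

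You also place the difficulty in the wrong spot. Frugality at $x$ is automatic, since a proper colouring of $C_5$ uses no colour three times. Because the added edges give the two outer neighbours of each rim vertex distinct colours, every colour repeated among the coloured neighbours of a rim vertex is a rim colour. So $x$ only has to avoid the at most five rim colours. The constraints that actually bite are the distance-two ones above, together with those at outer vertices shared by two rim vertices.

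\paragraph{The missing idea.} You need to make the extra triangle produce slack. As written, nothing in your colouring step depends on the triangle. Your plan would therefore have to show that no such $5$-wheel exists at all, and greedy colouring from lists of size $4$ cannot do that. In your own set-up the triangle does help:
\begin{itemize}
\item If it is $yy_1y_2$, a colour repeated in $N_G(y_i)\cap V(G')$ is either $c(y_{3-i})$, which is already forbidden, or is repeated among at most three other vertices. So $y$ has at most $4$ forbidden colours.
\item If it is $yzy'$, then $y'$ has at most three neighbours in $V(G')$, so $y$ and $z$ each have at most $5$ forbidden colours.
\end{itemize}
An already coloured rim vertex $q$ can cost another rim vertex only the colour $c(q)$. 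So you can colour the other rim vertices along a path and finish with $y$ (respectively with $y$ and then $z$).

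\paragraph{How the paper does it.} The paper exploits the triangle through the choice of the deleted set instead. In the first case it deletes only $\{x,y,z,u\}$, so $v$ and $w$ keep their colours and no cycle has to be coloured. In the second case it deletes $V(W)\cup\{y'\}$. Then $y$ and $z$ keep one outer neighbour each, have at least $6$ free colours, and are coloured after $y',w,v,u$.

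\paragraph{Completing your repetition idea.} Your idea of repeating a colour on a non-adjacent rim pair \emph{if possible} can also be completed, even without the triangle, but it needs a fallback you do not give.
\begin{itemize}
\item When a repetition is admissible, the remaining three rim vertices can be coloured greedily.
\item When none is admissible, the five lists cannot all equal one $4$-set $A$. Indeed, in that case no outer vertex would be shared, since sharing enlarges a list. The outer neighbours' colours would then lie outside $A$, making every repetition from $A$ admissible.
\end{itemize}
So in the second situation Hall's theorem yields five distinct rim colours from the lists, and any such choice is valid.
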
 
\begin{proof}
Let $x,y,z,u,v,w$ be the vertices of a $5$-wheel $W$ in $G$ with the central vertex $x$. 
Note that by Claim~\ref{cl:noK4}, $W$ is an induced $5$-wheel. Moreover, by Claim~\ref{cl:3}, every vertex from $W$ lies in a diamond, hence $\deg_G(x)=\deg_G(y)=\deg_G(z)=\deg_G(u)=\deg_G(v)=\deg_G(w)=5$. 

Suppose to the contrary, that $y$ is a vertex that lies in more than two triangles in $G$. There are two situations for this to happen, and we split the proof into two subcases correspondingly.

Firstly suppose that $y$ lies in a triangle whose remaining two vertices ($y_1$ and $y_2$) do not belong to $W$. Consider the graph $G''=G-\{x,y,z,u\}$, and let $N_G(y) \cap V(G'')=\{y_1,y_2\}$, $N_G(z) \cap V(G'')=\{z_1,z_2\}$, $N_G(u) \cap V(G'')=\{u_1,u_2\}$, $N_G(v) \cap V(G'')=\{v_1,v_2\}$ and $N_G(w) \cap V(G'')=\{w_1,w_2\}$. Further let $Y_i=N_G(y_i)\setminus\{y,y_{3-i}\}$, $Z_i=N_G(z_i)\setminus\{z\}$, $U_i=N_G(u_i)\setminus\{u\}$, $V_i=N_G(v_i)\setminus\{v\}$ and $W_i=N_G(w_i)\setminus\{w\}$ for all $i \in [2]$. 
Now let $G'$ be obtained from graph $G''$ by also adding the edges $z_1z_2$ and $u_1u_2$ if necessary (see Figure~\ref{fig:5wheel}$(a)$). Since $G'$ has smaller order than $G$ and $\Delta(G') \le 5$, there exists a $2$-frugal coloring $c$ of $G'$ using $10$ colors. 

Let us count the number of free colors at vertices $x$, $y$, $z$ and $u$ in $G$. Note that $x$ is adjacent to vertices $v$ and $w$, which are already colored. This yields two forbidden colors for $x$. Moreover, if $c(v_1)=c(v_2)$ and/or $c(w_1)=c(w_2)$, this prohibits two more colors for $x$. Altogether, $x$ has at least six free colors. Next, note that $|Y_i| \le 3$ for each $i\in [2]$, which together with colors $c(y_1)$ and $c(y_2)$ adds up to at most $2 \times 1 + 2=4$ forbidden colors for $y$. However, if $c(w_1)=c(w_2)$, then, together with the color of $w$, which is adjacent to $y$, this possibly yields two more forbidden colors for $y$. Thus, $y$ has at least four free colors. On the other hand, $|U_i|\le 4$, for each $i\in [2]$, which together with colors $c(u_1)$ and $c(u_2)$ adds up to at most $2 \times 2 + 2=6$ forbidden colors for $u$.
Finally, note that $|Z_i| \le 4$ for each $i\in [2]$, which together with colors $c(z_1)$ and $c(z_2)$ adds up to at most $2 \times 2 + 2=6$ forbidden colors for $z$. 
However, if $c(v_1)=c(v_2)$, then, together with the color of $v$, which is adjacent to $u$, this possibly yields two more forbidden colors for $u$. Thus, $u$ has at least two free colors. Finally, note that $|Z_i| \le 4$ for each $i\in [2]$, which together with colors $c(z_1)$ and $c(z_2)$ adds up to at most $2 \times 2 + 2=6$ forbidden colors for $z$. Therefore, $z$ has at least four free colors. 

We color the vertices $x,y,z,u$ in the following way. Firstly, we can color the vertices $y$ and $u$ with different colors in any order since they both have at least two free colors. (Note that we can also have $c(y)=c(v)$ and $c(u)=c(w)$.) This influences the number of free colors for $x$ and $z$ that can drop by two, having four and two free colors, respectively. Finally, we color the remaining vertices $x$ and $z$, again in an arbitrary order since they both have enough free colors. Since $c(z) \ne c(y)$ and $c(z) \ne c(u)$ (even if $c(y)=c(v)$ and $c(u)=c(w)$) we obtain a $2$-frugal coloring of $G$, which is a contradiction.

\medskip

Secondly, suppose that there is a vertex $y'$ in $G$ such that $y,z,y'$ induce a triangle in $G$. Consider $G''=G-(V(W)\cup \{y'\})$. Let $N_G(y) \cap V(G'')=\{y_1\}$, $N_G(z) \cap V(G'')=\{z_1\}$, $N_G(u) \cap V(G'')=\{u_1,u_2\}$, $N_G(v) \cap V(G'')=\{v_1,v_2\}$,  $N_G(w) \cap V(G'')=\{w_1,w_2\}$, and let $N_G(y') \cap V(G'')=\{y'_1,y'_2,y'_3\}$. We further set $Y_1=N_G(y_1)\setminus\{y\}$, $Z_1=N_G(z_1)\setminus\{z\}$,
$U_i=N_G(u_i)\setminus\{u\}$, $V_i=N_G(v_i)\setminus\{v\}$ and $W_i=N_G(w_i)\setminus\{w\}$ for all $i\in[2]$, and $Y'_i=N_G(y'_i)\setminus\{y'\}$  for all $i \in [3]$. Now let $G'$ be obtained from $G''$ by adding edges $y'_1y'_2$, $u_1u_2$, $v_1v_2$ and $w_1w_2$, if necessary (see Figure~\ref{fig:5wheel}$(b)$).  Since $G'$ has smaller order than $G$ and $\Delta(G') \le 5$, there exists a $2$-frugal coloring $c$ of $G'$ using $10$ colors. 

Now we count the numbers of free colors at each of $V(G)\setminus V(G')$. Note that $x$ has no forbidden color at this moment. Since $\vert Y'_i\vert \le 4$, together with colors $c(y'_i)$, $i\in [3]$, we have at most $3 \times 2+3=9$ forbidden colors at $y'$, i.e., there is at least one free color at $y'$. For vertices $y$ and $z$, $\vert Y_1\vert \le 4$ as well as $\vert Z_1 \vert \leq 4$, which together with color $c(y_1)$, or $c(z_1)$, imply potential $3$ forbidden colors at $y$, or at $z$, respectively. In addition, the possibility that $c(y'_i)=c(y'_3)$ for some $i\in [2]$ yields another forbidden color for $y$ and $z$. 
This means that there are at least $6$ free colors at each of $y$ and $z$. For vertices $u,v,w$, each of $U_i$, $V_i$ and $W_i$, $i\in [2]$, has at most $4$ vertices, which together with colors $c(u_i)$, $c(v_i)$ or $c(w_i)$, imply at most $6$ forbidden colors at each of $u$, $v$, $w$, respectively. Hence there are at least $4$ free colors at each of $u,v,w$.

We extend the coloring $c$ of $G'$ to $G$ in the following way. We color $y'$ with a free color, which may decrease the numbers of free colors at $x,y,z,u$ and $w$ by one. Next we color $w$ with a free color, which may decrease the numbers of free colors at $x,y,z,u$ and $v$ by one. Now we color $v$ with a free color (there are still at least three such colors), which may decrease the numbers of free colors at $x,y,z$ and $u$ by one. At this moment, $u$ has still at least one free color, which we use for coloring $u$. This may decrease the numbers of free colors at $x, y$ and $z$ by one. For vertices $y$ and $z$, we have at least two free colors ($6$ initially minus $4$ colors used on vertices $y',w,v,u$). Then we can clearly color vertices $y$ and $z$ with distinct colors from their free color lists. It remains to color vertex $x$, which still has at least four free colors (from initial $10$ free colors, additional $6$ colors may become forbidden for $x$ while coloring vertices $y',w,v,u,y$ and $z$). Thus, $x$ can be colored with one of the free   colors, yielding a $2$-frugal coloring of $G$, a contradiction.
\end{proof}

\begin{figure}[t]
\begin{center}
\begin{minipage}{0.45\textwidth}
\hskip 1cm
	\begin{tikzpicture}[scale=0.8]
	\draw (1.6,0.3)--(0,1.6)--(-1.6,.3);
	  \draw (1.6,0.3)--(1,-1.6);
    \draw (-1.6,0.3)--(-1,-1.6)--(1,-1.6);        
	\draw (-1.6,0.3)--(0,-0.1)--(-1,-1.6);
    \draw (1.6,0.3)--(0,-0.1)--(1,-1.6);
    \draw (0,1.6)--(0,-0.1);
    \draw (-0.8,3)--(0,1.6)--(0.8,3)--(-0.8,3);
    \draw (2.7,1.2)--(1.6,0.3)--(2.95,-0.2);
    \draw (-2.7,1.2)--(-1.6,0.3)--(-2.95,-0.2);
    \draw (1.4,-3.15)--(1,-1.6)--(2.4,-2.6);
    \draw (-1.4,-3.15)--(-1,-1.6)--(-2.4,-2.6);
    
  	\draw [fill=white] (1.6, 0.3) circle (2.5pt);
    \draw [fill=black] (-1.6, 0.3) circle (2.5pt);
    \draw [fill=white] (0, 1.6) circle (2.5pt);
    \draw [fill=white] (1,-1.6) circle (2.5pt);
    \draw [fill=black] (-1,-1.6) circle (2.5pt);
    \draw [fill=white] (0,-0.1) circle (2.5pt);
    \draw [fill=black] (0.8,3) circle (2.5pt);
    \draw [fill=black] (-0.8,3) circle (2.5pt);
    \draw [fill=black] (2.7,1.2) circle (2.5pt);
    \draw [fill=black] (2.95,-0.2) circle (2.5pt);
    \draw [fill=black] (-2.7,1.2) circle (2.5pt);
    \draw [fill=black] (-2.95,-0.2) circle (2.5pt);
    \draw [fill=black] (-2.4,-2.6) circle (2.5pt);
    \draw [fill=black] (-1.4,-3.15) circle (2.5pt);
    \draw [fill=black] (2.4,-2.6) circle (2.5pt);
    \draw [fill=black] (1.4,-3.15) circle (2.5pt);

    \draw (-1.6,0.6) node{$w$};
    \draw (0.4,1.6) node{$y$};
    \draw (1.6,0.6) node{$z$};
    \draw (-1.3,-1.5) node{$v$};
    \draw (1.3,-1.5) node{$u$};
    \draw (0,-0.5) node{$x$};
    \draw (1,3.3) node{$y_2$};
    \draw (-1,3.3) node{$y_1$};
    \draw (2.8,-2.6) node{$u_1$};
    \draw (1,-3.15) node{$u_2$}; 
    \draw (-1,-3.15) node{$v_1$};
    \draw (-2.8,-2.6) node{$v_2$};
    \draw (3.35,-0.2) node{$z_2$}; 
    \draw (3.1,1.2) node{$z_1$};
    \draw (-3.35,-0.2) node{$w_1$}; 
    \draw (-3.1,1.2) node{$w_2$};

\draw[dashed] [rotate around={290:(0.7,0.1)}] (0.7,0.1) ellipse (2.2cm and 1.3cm);
    \draw[dotted, line width=1.5pt] (2.4,-2.6)--(1.4,-3.15);
    \draw[dotted, line width=1.5pt] (2.95,-.2)--(2.7,1.2);

    \draw (0,-4.5) node {$(a)$};
        
	\end{tikzpicture} \\
    \end{minipage}
    \hskip 1.2cm
\begin{minipage}{0.45\textwidth}
\hskip 1cm
	\begin{tikzpicture}[scale=0.8]
        
	\draw (1.6,0.3)--(0,1.6)--(-1.6,.3);
    \draw (1.6,0.3)--(1,-1.6);
    \draw (-1.6,0.3)--(-1,-1.6)--(1,-1.6);
        
    \draw (-1.6,0.3)--(0,-0.1)--(-1,-1.6);
    \draw (1.6,0.3)--(0,-0.1)--(1,-1.6);
    \draw (0,1.6)--(0,-0.1);
    \draw (0,1.6)--(1.6,1.95)--(1.6,0.3);
    \draw (1.6,3.5)--(1.6,1.95)--(3,2.2);
    \draw (1.6,1.95)--(2.5,3.1);
    \draw (-0.8,3)--(0,1.6);
    \draw (-2.7,1.2)--(-1.6,0.3)--(-2.95,-0.2);
    \draw (1.4,-3.15)--(1,-1.6)--(2.4,-2.6);
    \draw (1.6,0.3)--(3.1,0.2);
    \draw (-1.4,-3.15)--(-1,-1.6)--(-2.4,-2.6);

    \draw [fill=white] (1.6, 0.3) circle (2.5pt);
    \draw [fill=white] (-1.6, 0.3) circle (2.5pt);
    \draw [fill=white] (0, 1.6) circle (2.5pt);
    \draw [fill=white] (1,-1.6) circle (2.5pt);
    \draw [fill=white] (-1,-1.6) circle (2.5pt);
    \draw [fill=white] (0,-0.1) circle (2.5pt);
    \draw [fill=white] (1.6,1.95) circle (2.5pt);
    \draw [fill=black] (-0.8,3) circle (2.5pt); 
    \draw [fill=black] (-2.7,1.2) circle (2.5pt); 
    \draw [fill=black] (-2.95,-0.2) circle (2.5pt); 
    \draw [fill=black] (3.1,0.2) circle (2.5pt) node[right] { $z_1$ };
    \draw [fill=black] (-2.4,-2.6) circle (2.5pt); 
    \draw [fill=black] (-1.4,-3.15) circle (2.5pt); 
    \draw [fill=black] (2.4,-2.6) circle (2.5pt); 
    \draw [fill=black] (1.4,-3.15) circle (2.5pt); 
    \draw [fill=black] (2.5,3.1) circle (2.5pt)  node[right] { $y'_2\,$ };
    \draw [fill=black] (3,2.2) circle (2.5pt) node[right] { $y'_3\,$ };
    \draw [fill=black] (1.6,3.5) circle (2.5pt) node[left] { $\,y'_1$ };

         
    \draw (1.3,2.2) node{$y'$};

    \draw (-1.6,0.6) node{$w$};
    \draw (0.2,1.9) node{$y$};
    \draw (1.8,0.6) node{$z$};
    \draw (-1.3,-1.5) node{$v$};
    \draw (1.3,-1.5) node{$u$};
    \draw (0,-0.5) node{$x$};
    \draw (2.8,-2.6) node{$u_1$};
    \draw (1,-3.15) node{$u_2$}; 
    \draw (-1,-3.15) node{$v_1$};
    \draw (-2.8,-2.6) node{$v_2$};
    \draw (-3.35,-0.2) node{$w_1$}; 
    \draw (-3.1,1.2) node{$w_2$};
    \draw (-1,3.3) node{$y_1$};

    \draw[dotted, line width=1.5pt] (1.6,3.5)--(2.5,3.1);
    \draw[dotted, line width=1.5pt] (-2.7,1.2)--(-2.95,-0.2);
    \draw[dotted, line width=1.5pt] (2.4,-2.6)--(1.4,-3.15);
    \draw[dotted, line width=1.5pt] (-2.4,-2.6)--(-1.4,-3.15);
    \draw[dashed] [rotate around={235:(0.3,0.2)}] (0.3,0.2) ellipse (2.7cm and 2.1cm);      
    \draw (0,-4.5) node {$(b)$};  

	\end{tikzpicture} \\

\end{minipage}
	\caption{Two cases in the proof of Claim~\ref{cl:8}. Dashed circle describes the removal of the subgraph, while dotted edges are added in $G'$ (if they were not already there in $G$); black vertices may have additional neighbors with respect to the maximum degree. }\label{fig:5wheel}
\end{center}
\end{figure}

\begin{claim} If there is a triangle with a vertex of degree $4$ in $G$, then each of the other two vertices of this triangle lies in at most two triangles.
\label{cl:9}
\end{claim}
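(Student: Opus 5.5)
We argue by contradiction, as in the preceding claims. Let $xyz$ be a triangle with $\deg_G(x)=4$, and suppose that $y$ lies in at least three triangles. The case of $z$ is symmetric.

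\emph{Structure.} By Claim~\ref{cl:2} we have $\deg_G(y)=\deg_G(z)=5$. By Claim~\ref{cl:4}, $xyz$ is the only triangle containing $x$. Write $N_G(x)=\{y,z,x_1,x_2\}$; then $x_1,x_2\notin N_G(y)\cup N_G(z)$.
\begin{itemize}
\item No other triangle contains the edge $yz$. If $yzw$ were a triangle with $w\neq x$, then $x,y,z,w$ would span a diamond containing a vertex of degree $4$, contradicting Claim~\ref{cl:3}.
\item Hence, writing $N_G(y)=\{x,z,y_1,y_2,y_3\}$, the two further triangles through $y$ use two edges among $y_1,y_2,y_3$. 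We may assume these are $y_1y_2$ and $y_2y_3$.
\item Claim~\ref{cl:noK4} gives $y_1y_3\notin E(G)$.
\end{itemize}
So $y$ lies in exactly three triangles: $yzx$, $yy_1y_2$ and $yy_2y_3$. Moreover $\{y,y_1,y_2,y_3\}$ induces a diamond, so $y_1,y_2,y_3$ all have degree $5$ by Claim~\ref{cl:3}.

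\emph{Reduction.} My first choice is $G''=G-\{x,y\}$. Form $G'$ by adding $x_1x_2$ (if absent) and $y_1y_3$. The edge $y_1y_3$ can be drawn inside the face created by deleting $y$, so $G'$ stays planar. Degrees do not grow: $x_1,x_2$ each lose $x$, and $y_1,y_3$ each lose $y$, so $\Delta(G')\le 5$. By minimality, $G'$ has a $2$-frugal $10$-coloring $c$. In $c$ the vertices $y_1,y_2,y_3$ receive three distinct colors, and $c(x_1)\neq c(x_2)$.
\begin{itemize}
\item At $x$: the colors $c(x_1),c(x_2),c(z)$ are forbidden, at most $2+2$ repeated colors in $N(x_1)\setminus\{x\}$ and $N(x_2)\setminus\{x\}$, and at most one repeated color in $\{z_1,z_2,z_3\}=N(z)\setminus\{x,y\}$. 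This gives at most $8$ forbidden colors before $y$ is colored.
\item The $2$-frugal condition at $x$ itself is automatic: $c(x_1)\neq c(x_2)$, and $y,z$ are adjacent, so no color appears three times in $N_G(x)$.
\item The same holds at $y$, since $y_1,y_2,y_3$ are pairwise distinct and $z$ is a fifth color class.
\end{itemize}

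\emph{Main obstacle: the free colors at $y$.} A naive count gives the $4$ neighbour colors, $2$ repetitions from each of the three sets $N(y_i)\setminus\{y\}$, one from $N(z)$, and the color of $x$. That far exceeds $10$, so this is where the real work lies. I would first try to exploit the triangle $y_1y_2y_3$ of $G'$. A color repeated in $N_{G'}(y_2)$ must avoid $c(y_1)$ and $c(y_3)$, so such repetitions come only from the two outside neighbours of $y_2$. Similarly, in $N_{G'}(y_1)$ the vertices $y_2,y_3$ carry distinct colors, so at most one repetition there involves $y_2$ or $y_3$.

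If this sharpened count still does not leave a free color at $y$, I would enlarge the removed set in the spirit of Claim~\ref{cl:8}. I would delete $y_2$ as well, and add an edge between the two outside neighbours of $y_2$. This turns $y_2$ into an uncolored vertex with many free colors. I would then color $y$ first, using that $y_2$ is still uncolored, and only afterwards color $y_2$ and finally $x$. At that last stage $x$ has at most $8$ forbidden colors plus $c(y)$, so it still has a free color; this yields a $2$-frugal $10$-coloring of $G$, the desired contradiction. Checking that the revised ordering $y,y_2,x$ leaves at least one free color at every step is the step I expect to require the most care.
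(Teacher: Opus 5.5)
\textbf{Gap: the free-colour count at $y$ is never completed.} Your structural analysis and the reduction $G'=G-\{x,y\}+x_1x_2+y_1y_3$ are fine. However, the proposal stops at the decisive step. You never show that $y$ gets a free colour; you leave it conditional (``if this sharpened count still does not leave a free color'') and defer to a fallback (also deleting $y_2$) whose colouring order you have not checked.

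The partial reasoning you give for the sharpened count is also wrong as stated. A colour repeated in $N_G(y_2)\setminus\{y\}=\{y_1,y_3,b_1,b_2\}$ need not avoid $c(y_1)$ and $c(y_3)$. The outer neighbours $b_j$ of $y_2$ are in general not adjacent to $y_1$ or $y_3$, so $c(b_1)=c(y_1)$ is allowed. Your remark about $N_{G'}(y_1)$ also looks at the wrong set. In $G$, $y_3$ is not a neighbour of $y_1$; the colours forbidden at $y$ because of $y_1$ are those repeated in $N_G(y_1)\setminus\{y\}=\{y_2,a_1,a_2,a_3\}$.

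\textbf{How to close it.} The missing observation is that a repeated colour which already belongs to a neighbour of $y$ costs nothing. With it, your first reduction works. Before $x$ is coloured, the forbidden colours at $y$ are as follows.
\begin{enumerate}
\item The neighbour colours $c(z),c(y_1),c(y_2),c(y_3)$: at most $4$.
\item From $y_1$: at most one new colour, a repetition among $a_1,a_2,a_3$, since any other repetition has colour $c(y_2)$. Likewise at most one from $y_3$.
\item From $y_2$: at most one, namely $c(b_1)=c(b_2)$.
\item From $z$: at most one, a repetition among $z_1,z_2,z_3$.
\item From $x$: none, since $c(x_1)\neq c(x_2)$ and any repetition involving $z$ has colour $c(z)$.
\end{enumerate}
That is at most $8$, so colour $y$ first. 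Then $x$ has at most $8+1=9$ forbidden colours, because the only new one created by $y$ is $c(y)$ itself. A small correction to your frugality check at $y$: it rests on $c(x)\neq c(z)$, not on $z$ being a separate colour class, since $c(z)$ may equal some $c(y_i)$.

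\textbf{Comparison with the paper.} Completed this way, your argument is a lighter variant of the paper's. The paper deletes the whole triangle and adds an edge among the outer neighbours of each of its three vertices. It must then colour the other degree-$5$ vertex first, using a single guaranteed free colour. Keeping $z$ coloured, as you do, avoids that tight count.
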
 
\begin{proof}
Let $x,y,z$ be the vertices of a triangle $T$ in $G$. Due to Claim~\ref{cl:2}, we may assume that $\deg_G(x)=4$ and $\deg_G(y)=\deg_G(z)=5$. Suppose to the contrary that $z$ lies in two more triangles. By Claim~\ref{cl:3} and~\ref{cl:noK4}, $z$ and $y$ lie in only one common triangle, namely $T$, and the same holds for $z$ and $x$. Therefore the additional two triangles in which $z$ lies must share an edge for otherwise $\deg_G(z)\ge 6$ which is impossible. Consider $G''=G-T$, and let $N_G(x) \cap V(G'')=\{x_1,x_2\}$, $N_G(y) \cap V(G'')=\{y_1,y_2,y_3\}$, and $N_G(z) \cap V(G'')=\{z_1,z_2,z_3\}$, where $z_1z_2, z_2z_3 \in E(G)$. Further let $X_i=N_G(x_i)\setminus\{x\}$ for all $i\in[2]$, and $Y_i=N_G(y_i)\setminus\{y\}$ for all $i\in [3]$. Letting $W=\{z_1,z_2,z_3\}$ we define $Z_i=N_G(z_i)\setminus W$ for all $i \in [3]$. Now let $G'$ be obtained from $G''$ by adding the edges $x_1x_2$, $y_1y_2$ and $z_1z_3$ if necessary (see Fig.~\ref{fig:cl:9}). Since $G'$ has smaller order than $G$ and $\Delta(G') \le 5$, there exists a $2$-frugal coloring $c$ of $G'$ using $10$ colors.

Now, count the number of free colors at vertices $x$, $y$ and $z$ in $G$. Note that $|X_i| \le 4$ for each $i\in [2]$, which together with colors $c(x_1)$ and $c(x_2)$ adds up to at most $2 \times 2 + 2=6$ forbidden colors at $x$. It can happen that either $c(y_1)=c(y_3)$ or $c(y_2)=c(y_3)$, which results in another forbidden color at $x$. Hence, there are at least three free colors at $x$. Next, note that $|Y_i| \le 4$ for each $i\in [3]$, which together with colors $c(y_1)$, $c(y_2)$, and $c(y_3)$ adds up to at most $3 \times 2 +3=9$ forbidden colors. Thus, there is at least one free color at $y$. Finally, note that $|Z_i| \le 3$ for each $i\in [3]$, and at most one color can be used twice on each $Z_i$. Together with colors $c(z_1)$, $c(z_2)$, and $c(z_3)$ this yields $3 \times 1 +3=6$ forbidden colors. Similarly as above note that either $c(y_1)=c(y_3)$ or $c(y_2)=c(y_3)$ can result in another forbidden color for $z$. Thus, there are at least three free colors at $z$.

Color the vertices of the triangle $T$ in the following way. First, color $y$ with a free color. Consequently the number of free colors at vertices $x$ and $z$ can drop by one, resulting in at least two free colors at both vertices. In the next two steps, color the vertices $x$ and $z$ with different colors in any order. This gives a $2$-frugal coloring of $G$, which is a contradiction.
\end{proof}

\begin{figure}[ht!]
\begin{center}
	\begin{tikzpicture}[scale=1]
	\draw (-1.5,0)--(1.5,0)--(0,2.2)--(-1.5,0);
    \draw (-1.5,0)--(-2.9,0)--(-2.5,-1)--(-1.5,-1.4)--(-1.5,0);
    \draw (-1.5,0)--(-2.5,-1);
	  \draw (1.5,0)--(2.9,0);
    \draw (1.5,0)--(2.5,-1);
    \draw (1.5,-1.4)--(1.5,0);

    \draw[dotted, line width=1.5pt] (-2.9,0)--(-1.5,-1.4);
    \draw[dotted, line width=1.5pt] (2.9,0)--(2.5,-1);





    \draw (0,2.2)--(-0.9,3.2);
    \draw (0,2.2)--(0.9,3.2);
    \draw[dotted, line width=1.5pt] (-0.9,3.2)--(0.9,3.2);

                      


   	\draw [fill=white] (1.5, 0) circle (2.5pt);
    \draw [fill=white] (-1.5, 0) circle (2.5pt);
    \draw [fill=white] (0, 2.2) circle (2.5pt);
    \draw [fill=black] (2.9, 0) circle (2.5pt) node [above] {$y_1$};
    \draw [fill=black] (2.5, -1) circle (2.5pt) node [below] {$y_2$};
    \draw [fill=black] (1.5, -1.4) circle (2.5pt) node [left] {$y_3\,$}; 
    \draw [fill=black] (-2.9, 0) circle (2.5pt) node[above] {$z_3$};
    \draw [fill=black] (-2.5, -1) circle (2.5pt) node [below] {$z_2$};
    \draw [fill=black] (-0.9, 3.2) circle (2.5pt);
    \draw [fill=black] (0.9, 3.2) circle (2.5pt);
    \draw [fill=black] (-1.5, -1.4) circle (2.5pt) node [right] {$\, z_1$};
        
         \draw (-1.1,0.2) node{$z$};
       \draw (1.1,0.2) node{$y$};
        \draw (0,1.9) node{$x$};
          \draw (-1,2.85) node{$x_1$};
\draw (1,2.85) node{$x_2$};

\draw[dashed] (0,0.7) circle (1.9);
	\end{tikzpicture} \\
	\caption{The situation in Claim~\ref{cl:9}. Dashed circle describes the removal of the triangle, while dotted edges are added in $G'$ (if they are not already there in $G$); black vertices may have additional neighbors with respect to the maximum degree.}\label{fig:cl:9}
\end{center}
\end{figure}

Now, we proceed with the proof of Theorem~\ref{thm:Delta5}. Letting $F(G)$ be the set of faces of $G$, we assign the initial charge $\mu(z)$ for each element $z\in V(G)\cup F(G)$ to be $\deg_G(z)-4$ (note that the degree of a face is defined as the number of its boundary edges (or vertices)). By Euler's formula the initial total charge of the graph, which is the sum of all charges, equals
$$\sum_{v\in V(G)}{(\deg_G(v)-4)}+\sum_{f\in F(G)}{(\deg_G(f)-4)}=2|E(G)|-4|V(G)|+2|E(G)|-4|F(G)|=-8.$$

Note that while the total charge is negative, only $3$-faces contribute to the negative values in the sum, since by Claim~\ref{cl:1}, $\delta(G)\ge 4$. 
The following discharging rules are used:
\begin{enumerate}[(R1)]
\item If $v$ is a vertex of degree $5$ that belongs to only one $3$-face, then $v$ passes the charge of ${1}$ to that $3$-face. 

\item If $v$ is a vertex of degree $5$ that belongs to two $3$-faces, then $v$ passes the charge of $\frac{1}{2}$ to the two $3$-faces to which it belongs. 

\item If $v$ is a vertex of degree $5$ that belongs to three $3$-faces, then $v$ necessarily belongs to a diamond $D$ such that $v$ has degree $3$ with respect to $D$. 
\begin{enumerate}[(R3.1)]
\item
If $v$ has degree $3$ with respect to only one diamond, then, we let $v$ pass the charge of $\frac{1}{4}$ to each of the two $3$-faces that form a diamond and the charge $\frac{1}{2}$ to the remaining $3$-face to which it belongs. 
\item 
If $v$ has degree $3$ with respect to two diamonds $D$, then these diamonds overlap in a $3$-face (the ``middle'' $3$-face), and we let $v$ pass the charge of $\frac{1}{2}$ to the middle $3$-face and the charge of $\frac{1}{4}$ to each of the remaining two $3$-faces. 
\end{enumerate}
\item If $v$ is a vertex of degree $5$ that belongs to five $3$-faces, then $v$ is the center of a $5$-wheel. 
By Claim~\ref{cl:8}, each of its neighbors $v_1,\ldots, v_5$ lies in exactly two triangles (of the same wheel). By Claim~\ref{cl:3}, each $v_i$ is a vertex of degree $5$. Then, we let each $v_i$ pass the charge of $\frac{1}{2}$ to the two $3$-faces to which it belongs (in accordance to (R2)).  
\end{enumerate}

Let $T$ be a $3$-face in $G$ with vertices $x,y$ and $z$. There are two possibilities depending on whether $T$ contains a vertex of degree $4$. 

We first consider the case when $T$ contains a vertex of degree $4$, say $\deg_G(z)=4$. Note that $T$ is the unique triangle to which $z$ belongs (by Claim~\ref{cl:4}), and the remaining two vertices $x,y$ of $T$ have degree $5$ (by Claim~\ref{cl:2}). By Claim~\ref{cl:9}, $x$ (respectively $y$) belongs to at most one additional triangle in $G$, and so by the rules (R1) and (R2), we let $x$ and $y$ each pass the charge of at least $\frac{1}{2}$ to the face $T$ (and the charge of $\frac{1}{2}$ to another $3$-face of $G$ if applicable). 

For the rest of the proof, we may assume that all vertices of $T$ have degree $5$. First, if $T$ lies in a $5$-wheel, we derive by (R4) that $T$ receives the charge $\frac{1}{2}$ from two of its vertices, which increases the charge of $T$ by $1$. 
Thus, we may further restrict to the case when $T$ is not part of $5$-wheel. 
Note that, by the rules, each of the vertices of $T$ passes the charge of at least $\frac{1}{4}$ to the $3$-face $T$. Now, if one of the vertices in $T$ lies in at most two triangles, then due to the first two rules it passes the charge of at least $\frac{1}{2}$ to $T$, which increases the charge of $T$ by at least $1$. 

Now, we are left with the case in which each of the three vertices of $T$ lies in exactly three $3$-faces (four $3$-faces is not possible by Claim~\ref{cl:5}, and five $3$-faces is not possible since no vertex of $T$ is the center of a $5$-wheel). First, assume that $T$ itself is not part of a diamond. Let $x$ be a vertex in $T$. Note that $x$ lies in at least one diamond, however $x$ has degree $3$ in exactly one diamond $D$. Hence, (R3.1) is applicable for $x$, which thus passes the charge of $\frac{1}{2}$ to $T$, and consequently the charge of $T$ increases by at least $1$.

Finally, assume that $T$ is part of a diamond. First assume that there exist two $3$-faces that share an edge with $T$, and thus form two diamonds with it; say diamond $D_1$ shares an edge with $xy$ and diamond $D_2$ shares an edge with $xz$. Then, by using the (R3.2), we infer that $x$ passes the charge of $\frac{1}{2}$ to $T$, and consequently the charge of $T$ increases by at least $1$. The second and remaining case is that exactly one $3$-face shares an edge with $T$, and forms one diamond with it; say this edge is $yz$. Since $x$ lies in three $3$-faces, where the remaining two faces do not share an edge with $T$, we infer that these two faces share an edge and form a diamond. Applying (R3.1) we infer that $x$ passes the charge of $\frac{1}{2}$ to $T$, and consequently the charge of $T$ increases by at least $1$.

All in all, we derive that $T$ received the charge of at least $1$ from its incident vertices by applying the above discharging rules. Therefore, the final total charge of the graph is non-negative, which is a contradiction with the initial charge being $-8$. The proof is complete. 
\

\section{Concluding remarks}

The next step in the context of this paper would be to resolve Problem~\ref{p:limited1} for the remaining two maximum degrees, that is, $\Delta\in \{6,7\}$. We think this may be even more involved than the case $\Delta=5$. In the light of Conjecture~\ref{conj:W}, the following problem is also natural. 

\begin{problem}
Prove that for every planar graph $G$ with maximum degree $\Delta\ge 8$, $$\fd(G)\le \Big\lfloor\frac{3\Delta}{2}\Big\rfloor+1.$$ 
\end{problem}


\section*{Acknowledgements}
B.B. and M.J. acknowledge the financial support of the Slovenian Research and Innovation Agency (research core funding No.\ P1-0297 and projects N1-0285, N1-0431 and J1-70045).

\end{document}